\newcommand{\abssec}[1]{\noindent\normalsize {\bfseries #1\quad }\ignorespaces}
\renewenvironment{abstract}{\abssec{Abstract}}{\par\vspace{.1in}}
\newenvironment{keywords}{\abssec{Key Words}}{\par\vspace{.1in}}
\newenvironment{AMSMOS}{\abssec{AMS subject classification}}{\par\vspace{.1in}}
\theoremstyle{plain}
\newtheorem{theorem}{Theorem}
\newtheorem{lemma}[theorem]{Lemma}
\theoremstyle{definition}
\newtheorem{assumption}[theorem]{Assumption}
\newtheorem{remark}[theorem]{Remark}
\numberwithin{equation}{section}
\DeclareMathOperator{\supp}{supp}
\DeclareMathOperator{\dist}{dist}
\DeclareMathOperator{\cl}{cl}
\begin{document}

\title{\LARGE $L^{\infty}$-error estimates for Neumann boundary value problems on graded meshes}

\author{Thomas Apel\thanks{\texttt{thomas.apel@unibw.de},
    Universit\"at der Bundeswehr M\"unchen, Institut f\"ur Mathematik
    und Computergest\"utzte Simulation, D-85577 Neubiberg, Germany} 
    \and Johannes Pfefferer\thanks{\texttt{pfefferer@ma.tum.de},
    Chair of Optimal Control, Technical University of Munich,
    Boltzmannstra\ss e~3, D-85748 Garching b. M\"unchen, Germany}
    \and Sergejs Rogovs\thanks{\texttt{sergejs.rogovs@unibw.de},
    Universit\"at der Bundeswehr M\"unchen, Institut f\"ur Mathematik
    und Computergest\"utzte Simulation, D-85577 Neubiberg, Germany} 
    \and Max Winkler \thanks{\texttt{max.winkler@mathematik.tu-chemnitz.de},
    Chemnitz University of Technology, Professorship Numerical Mathematics 
    (Partial Differential Equations),
    D-09112 Chemnitz, Germany} 
    }
\maketitle

\begin{abstract}
This paper deals with a priori pointwise error estimates for the finite element solution of boundary value
problems with Neumann boundary conditions in
polygonal domains. Due to the corners of the domain, the convergence rate of
the numerical solutions can be lower than in case of smooth domains. As a
remedy the use of local mesh refinement 
near the corners is investigated. In order to prove quasi-optimal a priori error estimates regularity results in weighted Sobolev
spaces are exploited. 
This is the first work on the Neumann boundary value problem where both the regularity of the data is exactly specified and the sharp convergence order $h^2 \lvert \ln h \rvert $ 
in the case of piecewise linear finite element approximations is obtained.  As
an extension we show the same rate for the approximate solution of a
semilinear boundary value problem. The proof relies in this case on the supercloseness
between the Ritz projection to the continuous solution and the finite element solution.
\end{abstract}

\begin{keywords}
 maximum norm estimates, graded meshes, second order elliptic equations,
 semilinear problems, finite element discretization
\end{keywords}

\begin{AMSMOS}
  35J91, 65N15, 65N30, 65N50
\end{AMSMOS}

\section{Introduction}
The problem we investigate in the present paper reads  
\begin{equation*}
-\Delta y + y = f\quad\mbox{in}\ \Omega,\qquad \partial_n y = g\quad\mbox{on}\ \Gamma,
\end{equation*}
where $\Omega$ is some plane polygonal domain with boundary $\Gamma$.
Our aim is to derive a quasi-optimal error estimate for the piecewise linear finite element 
approximation of $y$ in the maximum norm. 
As the boundary $\Gamma$ is polygonal, there occur singularities in the solution,
which result in a reduced regularity of the solution, more precisely, 
the regularity assumption $y\in W^{2,\infty}(\Omega)$ 
used in many contributions in general does not hold if the maximal interior angle of the domain is equal to or greater than $90^\circ$, even if the input data are 
regular. 
However, $W^{2,\infty}(\Omega)$-regularity is required to obtain the full
order of convergence in the $L^\infty(\Omega)$-norm on quasi-uniform meshes. 
In order to achieve this in arbitrary domains, we use locally refined meshes as the circumstances require.

Let us give an overview of some fundamental 
contributions on maximum norm estimates for elliptic problems, where convergence rates for piecewise linear finite element approximations are considered.
Most  of those papers deal with  approximations on quasi-uniform meshes with maximal element diameter $h$.
In \cite{Nitsche_spline}, Nitsche showed the convergence rate of $h$ for the Dirichlet problem in convex polygonal domains for a right-hand side in $L^2(\Omega)$.
Under the assumption that the solution belongs to $W^{2,\infty}(\Omega)$, Natterer \cite{Natterer} 
showed the convergence rate of $h^{2-\varepsilon}$ with arbitrary $\varepsilon>0$.
This result was improved by Nitsche \cite{Nitsche} who showed the approximation order $h^2 \lvert \ln h \rvert^{3/2}$.
The sharp convergence rate $h^2\lvert\ln h\rvert$ has been finally shown by
Frehse and Rannacher \cite{Frehse} and by 
Scott \cite{Scott} for a slightly different problem satisfying Neumann boundary conditions.
Closely related is a recent contribution of Kashiwabara and Kemmochi
\cite{KashiKem18}, who
consider the Neumann problem and show
the same rate for an approximation which is non-conforming as
the smooth computational domain is replaced by a sequence of polygonal domains.
In case of domains with polygonal boundary, where the regularity of the solution
might be reduced, Schatz and Wahlbin \cite{Schatz1} showed the convergence rate 
$h^{\min\{2,\pi/\omega\}-\varepsilon}$ for the Dirichlet problem, where $\omega$ is the largest opening 
angle in the corners. In a further paper \cite{Schatz2} they 
improved the convergence rate to $h^{2-\varepsilon}$ by refining the mesh
towards the corners, which have opening angles larger than
$90^\circ$.  
An additional improvement for locally refined meshes is shown by Sirch \cite{Sirch}, who obtained the 
rate $h^2 \lvert \ln h \rvert^{3/2}$. Moreover, in that reference precise regularity
assumptions on the data are established, which, for instance, are required to
derive pointwise error estimates for optimal control problems involving a boundary value problem as a constraint.
Later on, several articles, see e.\,g.\
\cite{SchatzWeak,SchatzWahlbinQuasi,LeyVex16},
dealt with stability estimates (up to the factor $\lvert\ln h\rvert$) for the Ritz
projection.
This directly implies a quasi-best-approximation property in the maximum
norm and is in particular of interest for parabolic problems.
In our context these results can also be used to derive error estimates.
However, up to now there are no results of this kind available in the
literature for locally refined meshes.

In the present paper we discuss the Neumann problem.
Under the assumption that the mesh is refined appropriately near the corners where the solution fails to be $W^{2,\infty}$-regular, 
we show the estimate
\begin{equation*}
 \lVert y-y_h\rVert_{L^\infty(\Omega)} \le c h^2 \lvert \ln h \rvert.
\end{equation*}
This estimate contains several novelties and improvements in comparison to the
results known from the literature:
\begin{enumerate}
  \item This is the first contribution dealing
    with maximum norm estimates for the Neumann
    problem using locally refined meshes. 
    The proof differs essentially from the Dirichlet case, since, for instance, 
    Poincar\'e inequalities are not applicable. Moreover, in the presence of
    Neumann conditions weighted Sobolev spaces with nonhomogeneous instead of homogeneous norms have
    to be used. As a consequence, several interpolation error estimates and a
    priori bounds for the solution are different.
  \item Even for less regular solutions (due to the corner singularities) but on locally refined meshes, 
    we show that the exponent of the logarithmic term is equal to one. This
    exponent is known to be sharp for piecewise linear elements
    \cite{Haverkamp}. 
    With slight modifications our result can be applied to the Dirichlet problem as well.
    Although the paper \cite{Sirch_paper} claims an error estimate for the Dirichlet boundary value problem with the rate $h^2\lvert \ln h \rvert $, there is a mistake in \cite[Lemma 2.13]{Sirch_paper} fixed in \cite{Sirch}, which led to the error rate $h^2 \lvert \ln h \rvert^{3/2}$. Using the techniques of the present paper, one can guarantee the reduced exponent of the logarithmic term for the Dirichlet problem as well, see \cite{Rogovs}.
  \item We can specify the required regularity of the input data on the
right-hand side of the estimate. The paper is written in the spirit that 
the constant $c$ depends linearly on some (weighted) H\"older norm of $f$ and $g$.
As already mentioned above, such a result is necessary in order to get maximum
norm estimates for related optimal control problems. This application will be
documented in a forthcoming paper.
\item As a further application we derive quasi-optimal pointwise error
  estimates for the finite element approximation of a semilinear partial differential
  equation. For this purpose, we pick up a fundamental idea from \cite{Pfefferer}. 
  The key observation therein is a supercloseness result between
  the discrete solution and the Ritz projection of the continuous solution.
  With this intermediate result and the 
  quasi-optimal convergence rates for linear problems in the maximum norm shown in
  the present paper, we can
  easily obtain the quasi-optimal convergence rate for semilinear problems as in the linear setting.
\end{enumerate}
For the proof of our main result we combine multiple techniques. Near corners 
where the singularities are mild, i.\,e., where the solution still belongs to
$W^{2,\infty}$, we apply the result of Scott \cite{Scott} to some localized auxiliary
problem.
Otherwise, we apply the ideas from Schatz and Wahlbin \cite{Schatz2}
and introduce dyadic decompositions around the singular corners which allows us
to exactly carve out both the singular behavior of the solution and the local
refinement of the finite element mesh.
With local finite element error estimates in the maximum norm, e.\,g.\ the one
from \cite{Wahlbin},  we can then decompose
the error into a local quasi-best-approximation term and a finite element
error in a weighted $L^2(\Omega)$-norm, where the weight is a regularized
distance function towards the corners. This term is discussed using a duality argument
as well as local energy norm estimates on the dyadic decomposition. The
pollution terms
arising in local finite element error estimates are treated by a kick-back
argument. For the best-approximation terms we use tailored interpolation error
estimates exploiting regularity results in weighted Sobolev spaces. The
required regularity results are taken from \cite{Kozlov2,Kozlov,Plamen,MR10,Nazarov}.

The paper is structured as follows. In Section 2 we introduce the notation and the function spaces that we use. 
Moreover, we recall a regularity result in weighted Sobolev spaces. 
We establish and prove the main result, namely the maximum norm estimate for the finite element approximation of the Neumann problem, 
in Section 3. The application of this result to semilinear problems is presented in Section 4.
In Section 5 we confirm by numerical experiments that the proven maximum norm estimate is sharp. 

We notice that, throughout the paper, $c>0$ is a generic constant independent of the mesh size, and may have a different value at each occurrence.

\section{Notation and regularity}\label{sec:regularity}
Throughout this paper $\Omega$ is a bounded, two dimensional domain with polygonal boundary $\Gamma$. The corner points of $\Omega$ are denoted by $x^{(j)},$ $j=1,\ldots m$, and are numbered counter-clockwise. 
Moreover, $\Gamma_j$ is the edge of the boundary $\Gamma$ which connects the corner points $x^{(j)}$ and $x^{(j+1)}$, and we define $x^{(m+1)} = x^{(1)}.$ The interior angle between $\Gamma_{j-1}$ and $\Gamma_j$ 
is denoted by $\omega_j$ with the obvious modification for $\omega_1$. Furthermore, we denote by $r_j$ and $\varphi_j$ the polar coordinates located at the point $x^{(j)}$ such that $\varphi_j=0$ on the edge $\Gamma_j$.

In this paper we derive a maximum norm error estimate for the finite element discretization of the Neumann problem
\begin{equation}\label{problem}
\begin{split}
 -\Delta y + y &= f\quad  \text{in}\ \Omega,\\
 \partial_n y &= g\quad   \text{on}\ \Gamma
 \end{split}
\end{equation}
with input data $f\in L^2(\Omega)$ and $g \in L^2(\Gamma)$. 
Later on, we will require higher regularity assumptions on the data in order to derive the quasi-optimal pointwise discretization error estimates. These are stated when needed.
The variational solution of \eqref{problem} is the unique element $y\in H^1(\Omega)$ which satisfies 
\begin{equation}\label{var_cont}
 a(y,v)=(f,v)_{L^2(\Omega)}+(g,v)_{L^2(\Gamma)}\quad \forall v\in V := H^1(\Omega),
\end{equation}
where $a:H^1(\Omega)\times H^1(\Omega) \rightarrow \mathbb{R}$ is the bilinear form defined by
\begin{equation}\label{eq:bilinear}
 a(y,v):=\int_{\Omega} (\nabla y \cdot \nabla v + y v).
\end{equation}	

It can be shown \cite{Grisvard} that the regularity of the solution $y$ of the boundary value problem \eqref{problem} near $x^{(j)}$ is characterized 
by the eigenvalues of an operator pencil generated by the Laplace operator in an infinite cone, which coincides with $\Omega$ near the corner $x^{(j)}.$ 
In our case, the leading eigenvalues are explicitly known to be $\lambda_j:=\pi/\omega_j$.
If $\lambda_j\notin\mathbb N$, the corresponding singular functions have the form
\[
c_jr_j^{\lambda_j}\cos(\lambda_j \varphi_j)
\]
with certain stress-intensity factors $c_j \in \mathbb{R}$. The singular functions are slightly different if $\lambda_j \in \mathbb{N}$. For a more intensive discussion on this we refer to \cite[Section 4.4]{Grisvard} and \cite[Section 2.\S4]{Nazarov}.

To capture these singular parts in the solution accurately, we use adapted function spaces
containing weight functions of the form $r_j^{\beta_j}$. 
To this end, we introduce for each $j=1,\ldots,m$ a circular sector $\Omega_{R_j}$,
\begin{equation*}
\Omega_{R_j}:=\{x \in \Omega \colon \lvert x - x^{(j)} \rvert < R_j \}
\end{equation*}
with radius $R_j>0$ centered at the corner $x^{(j)}$.
The radii $R_j$ can be chosen arbitrarily with the only restriction that the circular sectors $\Omega_{R_j}$ do not overlap for $j=1,\hdots, m$.
Furthermore, we require subsets depending on $i\in\mathbb{N}$ excluding the circular sectors $\Omega_{R_j/i}$ that we denote by
\[
 	\tilde\Omega_{R/i}:= \Omega\setminus\bigcup_{j=1}^m \Omega_{R_j/i}.
\] 
For $k\in \mathbb{N}_0$, $p \in [1,\infty]$ and $\vec{\beta} \in \mathbb{R}^m$
the weighted Sobolev spaces $W_{\vec{\beta}}^{k,p}(\Omega)$
are defined as the set of all functions in $\Omega$ with the finite norm
\begin{align*}
\lVert v \rVert_{W_{\vec{\beta}}^{k,p}(\Omega)} &= \lVert  v \rVert_{W^{k,p}(\tilde\Omega_{R/2})} + \sum_{j=1}^m \lVert  v \rVert_{W_{\beta_j}^{k,p}(\Omega_{R_j})}.
\end{align*}
Here, $W^{k,p}(\Omega)$ ($=H^k(\Omega)$ for $p=2$) are the classical Sobolev spaces. The weighted parts in the norms are defined by
\begin{align*}
 \lVert v \rVert_{W_{\beta_j}^{k,p}(\Omega_{R_j})}&:= \Bigg( \sum_{|\alpha| \leq k} \lVert r_j^{\beta_j} D^{\alpha} v \rVert^p_{L^p(\Omega_{R_j})} \Bigg)^{1/p}
 \end{align*}
for $1 \leq p<\infty$ and
\begin{align*}
 \lVert v \rVert_{W_{\beta_j}^{k,\infty}(\Omega_{R_j})}&:= \max_{|\alpha| \leq k} \lVert r_j^{\beta_j} D^{\alpha} v \rVert_{L^{\infty}(\Omega_{R_j})}
\end{align*}
for $p = \infty$.  The trace space of $W_{\vec{\beta}}^{k,p}(\Omega)$ for
$p\in [1,\infty)$ is denoted by $W^{k-1/p,p}_{\vec\beta}(\Gamma)$ and 
is equipped with the norm
\[
\lVert v \rVert_{W_{\vec{\beta}}^{k-1/p,p}(\Gamma)}:=\inf \Big\{ \lVert u \rVert_{W_{\vec{\beta}}^{k,p}(\Omega)} :  u \in W_{\vec{\beta}}^{k,p}(\Omega)\ \text{and}\ u\big|_{\Gamma\setminus \mathcal{C}} = v \Big\}
\]
with $\mathcal{C}:=\{x^{(1)},\ldots,x^{(m)}\}$, see \cite[Section 7]{Kozlov}.

Now, we recall a priori estimates in the weighted $H^2(\Omega)$-norm. 
Comparable results can be found in e.g. \cite{Solonnikov}, \cite{Plamen}, \cite[Section 4.5]{Nazarov}, \cite[Section 7]{Kozlov}. 
However, due to similarities of the considered problems as well as of the notation, we cite the result from \cite[Lemma 3.11]{Pfefferer}.
{\lemma\label{lemma_beta}
Let $\vec\beta\in [0,1)^m$ satisfy the condition
$1-\lambda_j < \beta_j$, $j=1,\ldots,m$.
For every $f \in W_{\vec{\beta}}^{0,2}(\Omega)$
and $g \in W_{\vec{\beta}}^{1/2,2}(\Gamma)$, the solution of problem \eqref{var_cont} belongs to $W_{\vec{\beta}}^{2,2}(\Omega)$ which satisfies the a priori estimate
\[
 \lVert y \rVert_{W_{\vec{\beta}}^{2,2}(\Omega)} 
 \leq c\left( \lVert f \rVert_{W^{0,2}_{\vec{\beta}}(\Omega)} + \lVert g \rVert_{W^{1/2,2}_{\vec{\beta}}(\Gamma)} \right).
\]
}

\begin{remark}\label{rem:W2inf_regularity}
  For the pointwise error analysis we have to guarantee
  $y\in W^{2,\infty}_{\vec\gamma}(\Omega)$ with certain weights $\vec\gamma\in
  [0,2)^m$. In order to show this, one typically uses regularity results in weighted H\"older spaces.
  One possibility is an application of the theory in 
  weighted $N$-spaces introduced for instance in \cite[Chapter 4, Section
  \S5.5]{Nazarov} and \cite[Theorem 1.4.5]{Kozlov2}. 
  Based on this, it is shown in \cite[Lemma 3.13]{Pfefferer} that 
  $y$ belongs to $W^{2,\infty}_{\vec\gamma}(\Omega)$ and fulfills
  \[
  \|y\|_{W^{2,\infty}_{\vec\gamma}(\Omega)} \le c \left(\|f\|_{N^{0,\sigma}_{\vec\delta}(\Omega)}
    + \|g\|_{N^{1,\sigma}_{\vec\delta}(\Gamma)}\right)
  \]
  provided that the assumption
  \begin{equation}\label{eq:assumption_gamma}
    \left\lbrace
    \begin{array}{rl}
      \vec\gamma\in [0,2)^m\quad\mbox{ with }\quad &\gamma_j > 2-\lambda_j,\\
      \vec\delta\in[\sigma,2+\sigma)^m\quad \mbox{ with }\quad &\delta_j=\gamma_j+\sigma,
    \end{array}\right.\qquad
    j= 1,\ldots,m,
  \end{equation}
  is fulfilled.

  A further possibility is to use regularity results in weighted $C$-spaces from e.g.
  \cite[Chapter 4, Section \S5.5]{Nazarov} or \cite[Section 8.3]{MR10}.
  These spaces are more suitable for the inhomogeneous Neumann problem as
  $N^{1,\sigma}_{\vec\delta}(\Gamma)$ does not 
  contain constant functions if $\delta_j < 1+\sigma$ for some $j=1,\ldots,m$, see \cite[Lemma 6.7.5]{MR10}.
  However, to the best of our knowledge, regularity results in 
  weighted $C$-spaces are not directly accesible for our setting in the
  literature, but can be deduced with similar arguments as in \cite[Lemma 3.13]{Pfefferer}.
  Related results in case of polyhedral domains ($n=3$) are already shown in 
  \cite[Theorem 8.3.1]{MR10}.
\end{remark}

\section{Finite element error estimates}\label{sec:fem}
In this section we prove the first main result of this paper, namely the $L^{\infty}(\Omega)$-norm error estimate for the finite element approximation 
of boundary value problem \eqref{problem}.
To this end, we introduce a family of graded triangulations $\{\mathcal{T}_h\}_{h>0}$ of $\Omega$.
The global mesh parameter is
denoted by $h< 1$. As we want to obtain a quasi-optimal error estimate for arbitrary polygonal domains, we consider 
locally refined meshes and denote by $\mu_j \in (0,1]$, $j=1,\ldots,m$, the mesh grading parameters which are collected in the vector $\vec{\mu}\in (0,1]^m$. 
The distance between a triangle $T \in \mathcal{T}_h$ and the corner $x^{(j)}$ is defined by
\[ r_{T,j} := \inf_{x \in T} \lvert x - x^{(j)} \rvert.\]
We assume that for $j=1,\dots,m$ the element size $h_T:=\text{diam}(T)$ satisfies
\begin{equation}\label{mesh_cond}
\begin{aligned}
c_1 h^{1/\mu_j} &\leq h_T \leq c_2 h^{1/\mu_j}&\quad& \text{if}\ r_{T,j} = 0,\\
c_1 hr_{T,j}^{1-\mu_j} &\leq h_T \leq c_2 hr_{T,j}^{1-\mu_j}&& \text{if}\ 0 < r_{T,j} < R_j,\\\notag 
c_1 h &\leq h_T \leq c_2 h&& \text{if}\  r_{T,j} > R_j,
\end{aligned}
\end{equation}
with some constants $c_1,c_2 > 0$ independent of $h$ and refinement radii $R_j>0$, $j=1,\ldots,m$.
Such meshes are known for instance from \cite{Ogan,Raugel,Schatz2}.
For the finite element discretization we use the space of continuous and piecewise linear
functions in $\overline\Omega$, this is
\begin{equation}\label{V_h}
 V_h:=\{ v_h \in C(\overline{\Omega}) : v_h|_{T} \in \mathcal{P}_1\ \text{for all}\ T \in \mathcal{T}_h \}.
\end{equation}
The finite element solution $y_h\in V_h$  satisfies
\begin{equation}\label{var_disc}
 a(y_h,v_h) = (f,v_h)_{L^2(\Omega)}+ (g,v_h)_{L^2(\Gamma)}\ \ \ \forall v_h \in V_h.
\end{equation}

Under the assumption that the solution belongs to $W^{2,\infty}(\Omega)$ the desired
convergence rate for the solution of \eqref{var_disc}
holds on quasi-uniform meshes, see Scott \cite{Scott}.
We apply this result in our proof locally, near those corners, where the solution still belongs to $W^{2,\infty}(\Omega)$. The global estimate reads as follows.
 \begin{theorem}\label{thscott}
 Assume that the solution $y$ of \eqref{var_cont} 
 belongs to $W^{2,\infty}(\Omega)$ and that $\Omega$ is convex.
 Let $y_h \in V_h$ be the solution of \eqref{var_disc}.
 Then, the finite element error can be estimated by 
 \begin{equation}
  \lVert y-y_h \rVert_{L^{\infty}(\Omega)} \leq c h^2 \lvert \ln h\rvert \lVert y \rVert_{W^{2,\infty}(\Omega)} 
 \end{equation}
on a quasi-uniform sequence of meshes ($\vec\mu=\vec 1$).
 \end{theorem}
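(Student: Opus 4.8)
The plan is to observe that the finite element solution $y_h$ is precisely the Ritz projection of $y$ with respect to $a(\cdot,\cdot)$, so that the estimate decouples into a stability property of this projection and a classical interpolation bound. Writing $R_h\colon V\to V_h$ for the map defined by $a(R_h u,v_h)=a(u,v_h)$ for all $v_h\in V_h$, Galerkin orthogonality $a(y-y_h,v_h)=0$ shows that $R_h$ is a projection ($R_h v_h=v_h$ for $v_h\in V_h$) and that $y_h=R_h y$.

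First I would invoke the $L^\infty$-stability of the Ritz projection on quasi-uniform meshes in two dimensions, namely
\[
\lVert R_h u\rVert_{L^\infty(\Omega)}\le c\,\lvert\ln h\rvert\,\lVert u\rVert_{L^\infty(\Omega)},
\]
which for continuous piecewise linear elements on convex $\Omega$ is the content of Scott \cite{Scott} (compare also \cite{Schatz1,Schatz2}). Given this, the quasi-best-approximation property follows in one line: for arbitrary $v_h\in V_h$, using $R_h v_h=v_h$,
\[
\lVert y-y_h\rVert_{L^\infty(\Omega)}=\lVert (y-v_h)-R_h(y-v_h)\rVert_{L^\infty(\Omega)}\le\bigl(1+c\,\lvert\ln h\rvert\bigr)\lVert y-v_h\rVert_{L^\infty(\Omega)}.
\]

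Next I would take $v_h=I_h y$, the nodal interpolant of $y$ in $V_h$. Since $y\in W^{2,\infty}(\Omega)$ and the sequence of meshes is quasi-uniform with mesh size $h$, the standard interpolation error estimate yields $\lVert y-I_h y\rVert_{L^\infty(\Omega)}\le c\,h^2\lVert y\rVert_{W^{2,\infty}(\Omega)}$. Inserting this into the quasi-best-approximation bound and absorbing the $+1$ into the generic constant produces the asserted estimate $\lVert y-y_h\rVert_{L^\infty(\Omega)}\le c\,h^2\lvert\ln h\rvert\,\lVert y\rVert_{W^{2,\infty}(\Omega)}$.

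The hard part is the stability estimate above with the sharp exponent one on the logarithm; this is exactly why the statement is attributed to Scott. Proving it from scratch requires the weighted-norm duality machinery rather than the elementary reduction just described: one fixes a point $x_0$ where the error is nearly attained, represents $(y-y_h)(x_0)$ through a smoothed Dirac mass and the associated discrete Green's function $G_h=R_h G$, and controls $G_h$ against the weight $\sigma(x)=(\lvert x-x_0\rvert^2+h^2)^{1/2}$. The logarithmic factor then emerges from $\int_\Omega\sigma^{-2}\,\mathrm{d}x\sim\lvert\ln h\rvert$, while the convexity hypothesis---guaranteeing $H^2$-regularity of the adjoint problem used in the duality argument---is what keeps the exponent of the logarithm equal to one.
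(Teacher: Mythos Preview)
The paper does not give its own proof of this theorem: it is stated as a known result and attributed to Scott \cite{Scott}, to be applied later as a black box near corners where $y\in W^{2,\infty}$ locally. So there is nothing to compare against beyond the citation itself.

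Your reduction is correct and is the standard way to read off the error bound once one has $L^\infty$-stability of the Ritz projection. One small point of attribution: what Scott actually proves in \cite{Scott} is the error estimate $\lVert y-y_h\rVert_{L^\infty(\Omega)}\le c h^2\lvert\ln h\rvert\lVert y\rVert_{W^{2,\infty}(\Omega)}$ directly via the weighted-norm/regularized Green's function argument you sketch in your last paragraph, not the stability bound $\lVert R_h u\rVert_{L^\infty}\le c\lvert\ln h\rvert\lVert u\rVert_{L^\infty}$ as such. The stability formulation you invoke is usually credited to later work (the paper's introduction mentions \cite{SchatzWeak,SchatzWahlbinQuasi,LeyVex16} in this connection). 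Of course the two statements are essentially equivalent and your derivation of the error estimate from stability plus interpolation is perfectly valid; just be careful about which reference you attach to which statement.
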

 
The following error estimate in the $L^2(\Omega)$-norm on graded meshes for the Neumann boundary value problem is shown in \cite[Lemma 3.41]{Pfefferer}.
{\lemma\label{lemma_pfefferer}
Let $y$ and $y_h$ be the solutions of \eqref{var_cont} and \eqref{var_disc}, respectively. 
It is assumed that $f \in W^{0,2}_{\vec{\beta}}(\Omega)$ and $g \in W^{1/2,2}_{\vec{\beta}}(\Gamma)$
with a weight vector $\vec\beta\in [0,1)^m$.
Then, the estimate
\begin{align*}
&\lVert y-y_h \rVert_{L^2(\Omega)} \ \leq c h^2  \lVert y \rVert_{W^{2,2}_{\vec{\beta}}(\Omega)}
\leq c h^2 \Big( \lVert f \rVert_{W^{0,2}_{\vec{\beta}}(\Omega)} + \lVert g \rVert_{W^{1/2,2}_{\vec{\beta}}(\Gamma)} \Big),
\end{align*}
is fulfilled, provided that $1-\lambda_j < \beta_j \leq 1-\mu_j$, $j=1,\ldots,m$.
}

Now we state the main theorem of this paper.
\begin{theorem}\label{th}
 Assume that $y$, the solution of \eqref{var_cont},
 belongs to $W^{2,\infty}_{\vec\gamma}(\Omega)$ with $\vec\gamma\in [0,2)^m$. 
 Moreover, let one of the following conditions be fulfilled:
 \begin{equation}\label{eq:th_assumptions}
   \begin{aligned}
&(i)\quad 0\le 2-\lambda_j < \gamma_j < 2-2\mu_j,\\
&(ii)\quad \lambda_j > 2,\ \gamma_j=0\ \mbox{and}\ \mu_j=1,
\end{aligned}
\end{equation}
for $j=1,\ldots,m$. Then, the solutions $y_h$ of \eqref{var_disc}
 satisfy the error estimate
 \[
  \lVert y-y_h \rVert_{L^{\infty}(\Omega)}\leq c h^2 \lvert \ln h\rvert  
  \lVert y \rVert_{W_{\vec{\gamma}}^{2,\infty}(\Omega)}.
 \]
\end{theorem}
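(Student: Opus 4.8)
The plan is to split $\Omega$ into a neighborhood of each corner plus an interior part, and to treat each corner according to which alternative in \eqref{eq:th_assumptions} it satisfies. First I would introduce smooth cutoff functions subordinate to the sectors $\Omega_{R_j}$ together with an interior cutoff supported away from $\mathcal{C}$. On the interior patch and near every corner of type $(ii)$ the solution is genuinely $W^{2,\infty}$ and the mesh is quasi-uniform, so after transferring the local defect to an auxiliary Neumann problem with smooth data I would invoke Theorem~\ref{thscott} directly, picking up the factor $h^2|\ln h|\,\|y\|_{W^{2,\infty}}$ on these regions. The difficulty therefore concentrates on the corners satisfying $(i)$, where $\lambda_j<2$ and the singular part $r_j^{\lambda_j}$ destroys plain $W^{2,\infty}$-regularity.

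For a fixed corner of type $(i)$ I would perform a dyadic decomposition of $\Omega_{R_j}$ into annuli $A_d:=\{x:d/2\le |x-x^{(j)}|\le d\}$ for $d=2^{-l}R_j$, down to an innermost scale $d_\ast\sim h^{1/\mu_j}$ at which the graded-mesh condition \eqref{mesh_cond} makes the local element size comparable to $h^{1/\mu_j}$; there are $L\sim|\ln h|$ such annuli. After rescaling $A_d$ to a reference annulus the mesh becomes quasi-uniform with local parameter $h_d\sim h\,d^{1-\mu_j}$, so a local interior maximum-norm estimate of Wahlbin type applies on a slightly enlarged annulus $A_d'$:
\[
\|y-y_h\|_{L^\infty(A_d)}\le c\,|\ln h_d|\,\min_{\chi\in V_h}\|y-\chi\|_{L^\infty(A_d')}+c\,d^{-1}\|y-y_h\|_{L^2(A_d')}.
\]
The best-approximation term is controlled by the nodal interpolant: using $|D^2y|\sim r_j^{-\gamma_j}\|y\|_{W^{2,\infty}_{\vec\gamma}}$ on $A_d'$ and $h_d\sim h\,d^{1-\mu_j}$ gives $\min_\chi\|y-\chi\|_{L^\infty(A_d')}\sim h^2 d^{\,2-2\mu_j-\gamma_j}\|y\|_{W^{2,\infty}_{\vec\gamma}}$, and the same bound on the innermost core yields $h^{(2-\gamma_j)/\mu_j}$. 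The inequality $\gamma_j<2-2\mu_j$ from $(i)$ is exactly what forces the exponent of $d$ to be positive and the core exponent to exceed $2$, so these terms are bounded by $h^2\|y\|_{W^{2,\infty}_{\vec\gamma}}$ \emph{uniformly} in $d$; summing the $L\sim|\ln h|$ contributions produces the sharp factor $h^2|\ln h|$ with logarithmic exponent exactly one.

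It remains to absorb the pollution term $d^{-1}\|y-y_h\|_{L^2(A_d')}$. I would sum its square over the dyadic levels, which, after accounting for the bounded overlap of the enlarged annuli, is controlled by a weighted global error of the form $\|r_j^{-1}(y-y_h)\|_{L^2(\Omega_{R_j})}$. This is estimated by a duality argument: solving the adjoint Neumann problem, supplying its weighted $W^{2,2}_{\vec\beta}$-regularity from Lemma~\ref{lemma_beta} and the graded-mesh $L^2$-estimate from Lemma~\ref{lemma_pfefferer}, with the exponents chosen so that $1-\lambda_j<\beta_j\le 1-\mu_j$. Because the Neumann setting forbids Poincar\'e inequalities and works with inhomogeneous weighted norms, this is where the argument departs most from the Dirichlet analysis. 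The remaining pollution contributions still carrying $\|y-y_h\|_{L^\infty}$ are handled by a kick-back argument: collected with a small multiplicative constant (obtained by tuning the innermost scale and the enlargement factors), they are absorbed into the left-hand side. Combining the interior/type-$(ii)$ estimate, the summed dyadic best-approximation bound, and the duality-controlled pollution term yields the claimed inequality. The step I expect to be hardest is the sharp duality and local energy estimate for the weighted $L^2$ pollution term, since avoiding an extra half-power of $|\ln h|$ requires balancing the dyadic energy estimates against the weighted regularity with no slack.
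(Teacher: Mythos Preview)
Your overall architecture matches the paper's proof: three cases (type-$(i)$ corners via dyadic decomposition and Wahlbin's local maximum-norm estimate, type-$(ii)$ corners via a localized application of Theorem~\ref{thscott}, and the interior again via Wahlbin), with the pollution term handled by a weighted-$L^2$ duality argument and the final closure by Lemma~\ref{lemma_pfefferer}. Two points, however, are not right as stated.

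First, the logarithm does not arise from ``summing the $L\sim|\ln h|$ contributions''. The $L^\infty$ norm over the corner sector is a \emph{maximum} over the dyadic rings, not a sum; each ring already carries its own factor $|\ln h_d|\sim|\ln h|$ from Wahlbin's estimate, and the best-approximation bound $h^2 d^{\,2-2\mu_j-\gamma_j}$ (with positive exponent by $(i)$) simply stays $\le ch^2$ under the maximum. In the paper the second source of $|\ln h|$ is the borderline weight in the duality estimate for the pollution term (Lemma~\ref{lemma_l2}), not a count of rings.

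Second, and more seriously, your kick-back is misplaced. You propose to absorb ``pollution contributions still carrying $\|y-y_h\|_{L^\infty}$'' into the left-hand side. But bounding the weighted $L^2$ pollution crudely by $\|y-y_h\|_{L^\infty}\cdot\|\sigma^{-1}\|_{L^2}$ gives a coefficient $\sim|\ln h|^{1/2}$, which is not small and cannot be absorbed. The paper instead works entirely at the weighted-$L^2$ level: with the \emph{regularized} weight $\sigma=r+d_I$ (you wrote $r_j^{-1}$, which is not square-integrable at the corner), the dual problem $-\Delta w+w=\sigma^{-2}(y-y_h)\chi$ is analyzed via \emph{local} a priori and local $H^1$ error estimates on each ring (Lemma~\ref{lemma_H1}), and the resulting inequality contains a term $c\,c_I^{-\varepsilon}\|\sigma^{-1}(y-y_h)\|_{L^2}$ whose coefficient is made $<1$ by choosing the innermost scale $d_I=c_I h^{1/\mu}$ with $c_I$ large. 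After this kick-back one is left with $|\ln h|\,\|y-y_h\|_{L^2(\Omega)}$, and only then does Lemma~\ref{lemma_pfefferer} finish the argument. Your sketch of the duality step (global Lemma~\ref{lemma_beta} plus Lemma~\ref{lemma_pfefferer}) is too coarse to produce the needed small constant and to avoid an extra half-power of the logarithm; this is exactly the ``hardest step'' you flagged, and it requires the local machinery of Lemma~\ref{lemma_l2}.
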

In Remark \ref{rem:W2inf_regularity} we have already discussed several assumptions 
on the input data which imply the regularity for $y$ required in Theorem \ref{th}.
In particular, the range of feasible weights $\vec\gamma$ is non-empty if 
$\mu_j < \lambda_j/2$ for all $j=1,\ldots,m$ with $\omega_j \ge \pi/2$, and
otherwise $\mu_j =1$.

The remainder of this section is devoted to the proof of Theorem \ref{th}.
We distinguish among three cases, depending on the point $x_0$ where $\lvert y-y_h\rvert$
attains its maximum.
If $x_0$ is located near a corner, namely in $\Omega_{R_j/16}$ for some
$j=1,\ldots,m$, we discuss the cases:
\begin{enumerate}
 \item The triple $(\lambda_j,\gamma_j,\mu_j)$ satisfies \eqref{eq:th_assumptions} (i). 
   In this case we prove the desired estimate using a technique of Schatz and Wahlbin \cite{Schatz2}, this is, 
   we introduce a dyadic decomposition of $\Omega_{R_j}$ around the singular corner, and apply local estimates on each subset, where the meshes 
   are locally quasi-uniform.
 \item  The triple $(\lambda_j,\gamma_j,\mu_j)$ satisfies
   \eqref{eq:th_assumptions} (ii). 
   Due to $y\in W^{2,\infty}(\Omega_{R_j})$ we
   can then apply the estimate from Theorem \ref{thscott} 
   for a localized problem near the corner. 
 \end{enumerate}
The remaining case is:
\begin{enumerate}
\item[3.] The maximum is attained in $\tilde\Omega_{R/16}$. Here, we use interior maximum norm estimates, e.\,g.\ from \cite[Theorem 10.1]{Wahlbin}, and 
exploit higher regularity in the interior of the domain.
\end{enumerate}

\textbf{Case 1:} $x_0\in\Omega_{R_j/16}$ with $(\lambda_j,\gamma_j,\mu_j)$ satisfying
\eqref{eq:th_assumptions} (i).
For the further analysis we assume that $x^{(j)}$ is located at the origin and $R_{j}=1$. Furthermore, we suppress the subscript $j$ and write $\Omega_R = \Omega_{R_{j}}$, $\mu=\mu_j$, etc.
Analogous to \cite{Schatz2} we introduce a dyadic decomposition of $\Omega_R$,
\[
 \Omega_J=\{ x\in \Omega: d_{J+1} \leq |x|\leq d_J \},\quad J=0,\ldots,I,
\]
with $d_J:=2^{-J}$ for $J=0,\ldots,I$ and $d_{I+1}=0$.
Obviously, there holds
\begin{equation}\label{dyadic}
 \Omega_R = \bigcup_{J=0}^{I} \Omega_J, 
\end{equation} 
see also Figure \ref{part_of_omega}.
The largest index $I$ is chosen such that $d_I = c_I h^{1/\mu}$
with a mesh-independent constant $c_I\geq1$. 
This constant is specified in the proof of Lemma \ref{lemma_l2} where a kick-back argument is applied,
which holds for sufficiently large $c_I$ only. We hide it in the generic constant if there is no need in it.
\begin{figure}
\centering
\includegraphics[width=.3\textwidth]{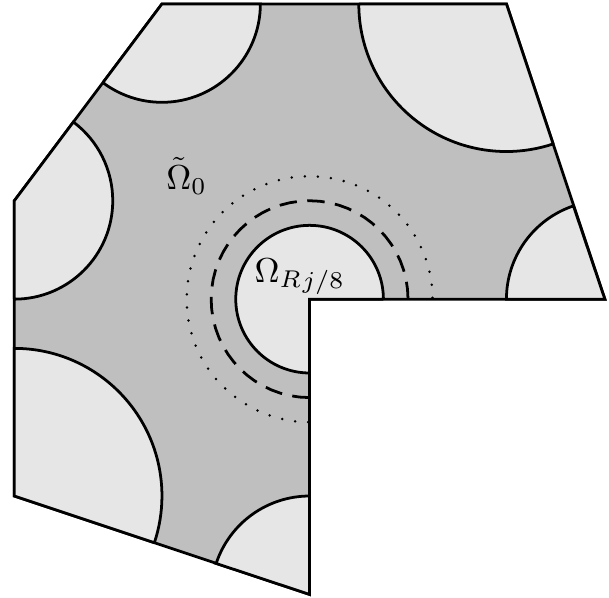}
\hspace{2cm}\includegraphics[width=.3\textwidth]{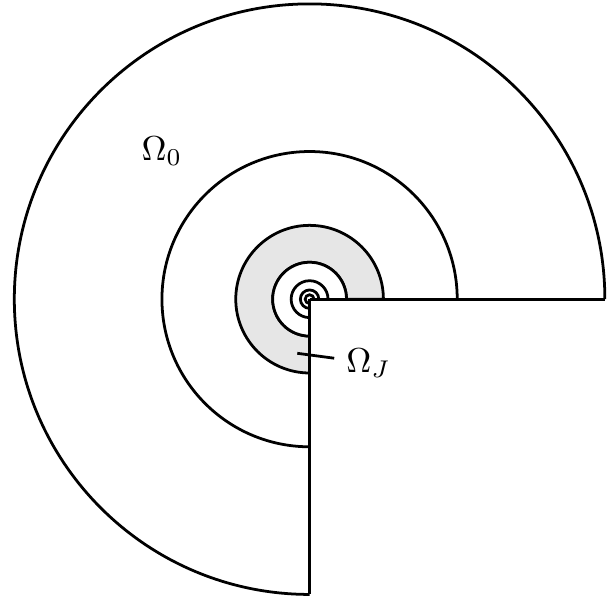}
  \caption{Partition of $\Omega$ in subdomains $\tilde{\Omega}^0$ and $\Omega_{R_j/8}$ (left) and partition of $\Omega_R$ in subdomains $\Omega_J$ (right)}\label{part_of_omega}
\end{figure}

We also  introduce the extended domains $\Omega_J'$ for $J \geq 1$ and $\Omega_J''$ for $J \geq 2$ by
\[
 \Omega_J'= \Omega_{J-1} \cup \Omega_J \cup \Omega_{J+1},\quad
 \Omega_J''= \Omega_{J-1}' \cup \Omega_J' \cup \Omega_{J+1}'
\]
with the obvious modifications for $J=I-1,I$. Obviously, the meshes $\mathcal T_h$ are locally quasi-uniform with the mesh sizes 
\[
 h_T \sim h_J:= h d_J^{1-\mu}\quad \mbox{if}\quad T\cap\Omega_J''\ne\emptyset
\]
for $J=2,\ldots,I$. This allows us to deduce local error estimates presented in the sequel of this paper.

For the convenience of the reader, we briefly summarize the forthcoming considerations. 
In Lemma \ref{lemma_inf} we show local $L^{\infty}$-norm error estimates on the subsets $\Omega_J$ where the underlying meshes are locally quasi-uniform. 
We distinguish between two cases.
In subdomains $\Omega_J$ for $J  > I-2$ we can use a local maximum norm estimate from \cite[Theorem 10.1]{Wahlbin}, 
and for $J = I-2,I-1,I$ we use a different approach based on an inverse inequality which we prove in Lemma \ref{lemma_v_est}. 
Both techniques allow a local decomposition of the finite element error into a best-approximation term, for which we apply interpolation error estimates that we recall in Lemma \ref{lemma_int},
and a pollution term. 
The pollution term arises as a weighted $L^2$-error which we discuss in Lemma \ref{lemma_l2}. For the proof of this estimate 
we also require local error estimates in $H^1(\Omega_J)$ stated in Lemma \ref{lemma_H1}.
 
{\lemma\label{lemma_v_est}
For $v_h \in V_h$ and $J = I-2,I-1,I$ there is the estimate
\[
 \lVert v_h \rVert_{L^{\infty}(\Omega_{J})} \leq c d_J^{-1} \lVert v_h \rVert_{L^2(\Omega'_{J})}.
\]}
\begin{proof} 
We denote by $T_*$ the element where $\lvert v_h \rvert$ attains its maximum
within $\Omega_J$ and by $F_{T^*}\colon \hat{T}\to T^*$ the affine transformation 
from the reference element $\hat T$ to $T^*$. Moreover, we use the notation
$\hat v_h(\hat x) := v_h(F_{T^*}(\hat x))$ for $\hat x\in\hat T$.
By this transformation and norm equivalences in finite-dimensional spaces, we have
\begin{align*}
\lVert v_h \rVert_{L^{\infty}(\Omega_J)} &\leq \lVert v_h \rVert_{L^{\infty}(T_*)} =
\lVert \hat{v}_h \rVert_{L^{\infty}(\hat{T})} 
 \leq c \lVert \hat{v}_h \rVert_{L^2(\hat{T})}\\
 &\leq c   h_{T_*}^{-1} \lVert v_h \rVert_{L^2(\Omega_J')} 
 \leq c d_J^{-1} \lVert v_h \rVert_{L^2(\Omega_J')} ,
\end{align*}
which proves the desired result, since $h_{T_*} \geq c h^{1/\mu}\sim d_I \sim d_J$ for $J=I-2,I-1,I$. 
\end{proof}

Next, we consider some error estimates for the nodal interpolant $I_h\colon C(\overline\Omega)\to V_h$.
The following results on graded meshes are taken from \cite[Lemma 3.58]{Pfefferer}, see also \cite[Lemma 3.7]{APR}.
{\lemma \label{lemma_int}
Let $p \in [2,\infty]$ and $l\in\{0,1\}$.
\begin{itemize}
\item[(i)] For $J=1,\ldots,I-2$ the estimates 
\begin{align}\label{L2IntO}
\lVert v - I_h v \rVert_{W^{l,2}(\Omega_J)} &\leq c h^{2-l} d_J^{(2-l)(1-\mu)+1-2/p-\beta}\lvert v \rvert_{W^{2,p}_{\beta}(\Omega'_J)},\\\label{LinftyIntO}
\lVert v - I_h v \rVert_{L^{\infty}(\Omega_J)} &\leq c h^{2-2/p} d_J^{(2-2/p)(1-\mu)-\beta}\lvert v \rvert_{W^{2,p}_{\beta}(\Omega'_J)}
\end{align}
are valid if $v \in W^{2,p}_{\beta}(\Omega'_J)$ with $\beta \in \mathbb{R}.$
\item[(ii)] Let $\theta_l := \max \{ 0, (3-l-2/p)(1-\mu) - \beta \}$ and $\theta_{\infty} := \max \{ 0, (2-2/p)(1-\mu) - \beta \}$. 
For $J=I,I-1$ the inequalities 
\begin{align}\label{L2IntI}
\lVert v - I_h v \rVert_{W^{l,2}(\Omega_J)} &\leq c c_I^{\theta_l +1 -2/p} h^{(3-l-2/p-\beta)/\mu} \lvert v \rvert_{W^{2,p}_{\beta}(\Omega'_J)},\\\label{LinftyIntI}
\lVert v - I_h v \rVert_{L^{\infty}(\Omega_J)} &\leq c c_I^{\theta_{\infty}} h^{(2-2/p-\beta)/\mu}  \lvert v \rvert_{W^{2,p}_{\beta}(\Omega'_J)}
\end{align}
hold if $v \in W^{2,p}_{\beta}(\Omega'_J)$ with $2/p-2 < \beta < 2-2/p.$
\end{itemize}}
\begin{remark}
  Lemma \ref{lemma_int} remains valid when replacing $\Omega_J$ by $\Omega_J'$ and $\Omega_J'$ by $\Omega_J''$, respectively. In this case the index range in part $(i)$ is $J=2,\ldots,I-3$,
  and in part $(ii)$ $J=I-2,\ldots,I$.
\end{remark}

The next result is needed in the proofs of Lemma \ref{lemma_inf} and Lemma \ref{lemma_l2}. It follows directly from \cite[Lemma 3.60]{Pfefferer}, see also \cite[Lemma 3.9]{APR}.
{\lemma\label{lemma_H1}
The following assertions hold:
\begin{itemize}
\item[(i)] For $J=2,\ldots,I-3$ the estimate 
\[
\lVert y - y_h \rVert _{H^1(\Omega_J)} \leq c \Big( h d_J^{2 \varepsilon + \mu}  \lvert y \rvert_{W^{2,\infty}_{\gamma}(\Omega''_J)} + d_J^{-1} \lVert y - y_h \rVert _{L^2(\Omega_J')} \Big) 
\]
is valid if $y\in W^{2,\infty}_{\gamma}(\Omega''_J)$ with $0 \leq \gamma \leq 2-2\mu -2 \varepsilon$ and sufficiently small $\varepsilon\ge 0$.
\item[(ii)] For $J=I-2,\ldots,I$ the inequality
\[
\lVert y - y_h \rVert _{H^1(\Omega_J)}  \leq c \Big( c_I^5  h^2  \lvert y \rvert_{W^{2,\infty}_{\gamma}(\Omega''_J)} + d_J^{-1} \lVert y - y_h \rVert _{L^2(\Omega_J')} \Big) 
\]
holds true if $y\in W^{2,\infty}_{\gamma}(\Omega''_J)$ with $0 \leq \gamma \leq 2-2\mu $.
\end{itemize}
}

In the next lemma we show local error estimates in the $L^{\infty}$-norm. 

{
\lemma\label{lemma_inf} For $y \in W_{\gamma}^{2,\infty}(\Omega''_J)$ with $0 \leq  \gamma \leq 2-2\mu$ the estimates
\begin{align}\label{Linfty1}
\lVert y - y_h  \rVert_{L^{\infty}(\Omega_J)} &\leq c \Big( h^2 \lvert \ln h\rvert \lvert y \lvert_{W^{2,\infty}_{\gamma}(\Omega''_J)}+ d_J^{-1} \lVert y - y_h  \rVert_{L^{2}(\Omega'_J)} \Big)\ &&for \ 2 \leq J< I-2,\\\notag
\lVert y - y_h  \rVert_{L^{\infty}(\Omega_J)} &\leq c \Big(  h^2 \lvert y \rvert_{W^{2,\infty}_{\gamma}(\Omega''_J)} + d_J^{-1}\lVert  y -  y_h \rVert_{L^2(\Omega'_J)} \Big)\ &&for \ J \geq I-2
\end{align}
are valid.  
}
\begin{proof}
Let us first consider the case $J<I-2$. From Theorem 10.1 and Example 10.1 in \cite{Wahlbin} the estimate
\begin{equation}\label{Schatz}
\lVert y - y_h  \rVert_{L^{\infty}(\Omega_J)} \leq c \Big( \lvert \ln h\rvert \inf_{\chi \in V_h} \lVert y - \chi \rVert_{L^{\infty}(\Omega'_J)} + d_J^{-1} \lVert y - y_h  \rVert_{L^{2}(\Omega'_J)} \Big)
\end{equation}
can be derived.
Estimate \eqref{Linfty1} in case of $2\le J < I-2$ follows from \eqref{Schatz} and \eqref{LinftyIntO} with $p=\infty$ exploiting $\gamma \leq 2-2\mu$, which provides
\begin{equation*}
\lVert y - I_h  y \rVert_{L^{\infty}(\Omega'_J)} \leq c h^2  d_J^{2-2 \mu - \gamma} \lvert y \rvert_{W^{2,\infty}_{\gamma}(\Omega''_J)} \leq c h^2 \lvert y \rvert_{W^{2,\infty}_{\gamma}(\Omega''_J)}.
\end{equation*}
For the case  $J=I,I-1,I-2$ we use the triangle inequality
\begin{equation}\label{triangle}
\lVert y -  y_h \rVert_{L^{\infty}(\Omega_J)} \leq \lVert y - I_h  y \rVert_{L^{\infty}(\Omega_J)} + \lVert I_h y -  y_h \rVert_{L^{\infty}(\Omega_J)}.
\end{equation}
The first term on the right-hand side can be treated with \eqref{LinftyIntI},
taking into account the relation $2-\gamma \ge 2\mu$. This implies
\begin{equation*}
\lVert y - I_h  y \rVert_{L^{\infty}(\Omega_J)}  \leq c  h^{(2-\gamma)/\mu} \lvert y \rvert_{W^{2,\infty}_{\gamma}(\Omega'_J)}\leq c  h^2\lvert y \rvert_{ W^{2,\infty}_{\gamma}(\Omega'_J)}.
\end{equation*}
We estimate the second term on the right-hand side of \eqref{triangle} by applying the inverse inequality from Lemma \ref{lemma_v_est}, and get
\[
\lVert I_h y -  y_h \rVert_{L^{\infty}(\Omega_J)} \leq c  d_J^{-1} \lVert I_h y -  y_h \rVert_{L^2(\Omega'_{J})}  \leq
 c d_J^{-1} \Big( \lVert  y - I_h y \rVert_{L^2(\Omega'_J)} + \lVert  y -  y_h \rVert_{L^2(\Omega'_J)} \Big).
\]
Finally, using \eqref{L2IntI} with $p=\infty$ we obtain
\[
d_J^{-1}\lVert  y - I_h y \rVert_{L^2(\Omega'_J)} \leq c  d_J^{-1} h^{(3-\gamma)/\mu} \lvert y \rvert_{W^{2,\infty}_{\gamma}(\Omega''_J)}
\leq c  h^{2} \lvert y \rvert_{W^{2,\infty}_{\gamma}(\Omega''_J)},
\]
where we used $d_J^{-1} h^{1/\mu} \leq d_I^{-1} h^{1/\mu} = c_I^{-1}\le c$ and the grading condition.
\end{proof}

The next lemma provides an estimate for the second terms on the right-hand sides of the estimates from Lemma \ref{lemma_inf}, the 
so-called pollution terms. 
To cover all cases $J=4,\ldots,I$, we introduce the weight function $\sigma(x) := r(x)+d_I$ 
and easily confirm that these pollution terms are bounded by 
$\lVert \sigma^{-1} (y-y_h)
\rVert_{L^2(\Omega_{R/8}})$. 
To estimate this term we can basically use the Aubin-Nitsche method involving a kick back argument.
Similar results can be found in \cite[Lemma 3.10]{APR}, where $\lVert \sigma^{-\tau} (y-y_h) \rVert_{L^2(\Omega_{R/8})}$ with $\tau = 1/2$ is considered,
or in \cite[Lemma 3.61]{Pfefferer}, where the previous estimate is generalized to exponents satisfying $1-\lambda < \tau < 1$. 
Nevertheless,  some modifications are necessary for $\tau = 1$ 

\begin{lemma}\label{lemma_l2}
Assume that $0\leq \gamma \leq 2-2\mu -2\varepsilon$
with $\varepsilon>0$ sufficiently small.
Then the estimate
\[
\lVert \sigma^{-1} (y-y_h) \rVert_{L^2(\Omega_{R/8})} \leq c \Big( h^2 \lvert \ln h\rvert \lVert y \rVert_{W^{2,\infty}_{\gamma}(\Omega_R)} + \lvert \ln h\rvert \lVert y-y_h \rVert_{L^2(\Omega_R)} \Big)
\]
is satisfied.
\end{lemma}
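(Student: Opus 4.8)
The plan is to run an Aubin--Nitsche duality argument in the regularized weight $\sigma$ and to close the estimate by a kick-back. Set $e:=y-y_h$ and let $w\in H^1(\Omega)$ solve the dual problem $a(v,w)=(\sigma^{-2}e,v)_{L^2(\Omega_{R/8})}$ for all $v\in H^1(\Omega)$, where $\sigma^{-2}e$ is extended by zero outside $\Omega_{R/8}$. Choosing $v=e$ and exploiting the Galerkin orthogonality $a(e,v_h)=0$ for all $v_h\in V_h$, in particular for the nodal interpolant $v_h=I_hw$, I obtain
\[
\lVert\sigma^{-1}(y-y_h)\rVert_{L^2(\Omega_{R/8})}^2=a(e,w)=a(e,w-I_hw).
\]
Since the bilinear form is elementwise $H^1$-bounded, I would split this quantity along the dyadic decomposition \eqref{dyadic} and apply Cauchy--Schwarz on each annulus, arriving (up to a constant) at $\sum_{J}\lVert y-y_h\rVert_{H^1(\Omega_J)}\,\lVert w-I_hw\rVert_{H^1(\Omega_J)}$.

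The two local factors are then supplied by the preparatory lemmas. Lemma \ref{lemma_H1} controls $\lVert y-y_h\rVert_{H^1(\Omega_J)}$ by a consistency part (of size $h\,d_J^{2\varepsilon+\mu}\lvert y\rvert_{W^{2,\infty}_{\gamma}(\Omega_J'')}$, resp.\ $c_I^5h^2\lvert y\rvert_{W^{2,\infty}_{\gamma}(\Omega_J'')}$ on the innermost annuli) together with the pollution part $d_J^{-1}\lVert y-y_h\rVert_{L^2(\Omega_J')}$, whereas Lemma \ref{lemma_int} with $l=1$ and $p=2$ yields $\lVert w-I_hw\rVert_{H^1(\Omega_J)}\le c\,h\,d_J^{1-\mu-\beta}\lvert w\rvert_{W^{2,2}_{\beta}(\Omega_J')}$ for an admissible dual weight $\beta$. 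For the a priori bound on the dual solution the decisive structural fact is $\lVert r\,\sigma^{-1}\rVert_{L^\infty(\Omega)}\le1$, so that at the critical weight $\beta=1$ the data obeys $\lVert\sigma^{-2}e\rVert_{W^{0,2}_{\beta}(\Omega)}\le\lVert\sigma^{-1}(y-y_h)\rVert_{L^2(\Omega_{R/8})}$; invoking the weighted a priori estimate of Lemma \ref{lemma_beta} (at this endpoint weight, see below) then gives $\lVert w\rVert_{W^{2,2}_{\beta}(\Omega)}\le c\,\lVert\sigma^{-1}(y-y_h)\rVert_{L^2(\Omega_{R/8})}$.

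Summation over $J$ then proceeds by Cauchy--Schwarz in the index together with the finite-overlap bound $\sum_{J}\lvert w\rvert_{W^{2,2}_{\beta}(\Omega_J')}^2\le c\,\lVert w\rVert_{W^{2,2}_{\beta}(\Omega)}^2$. Pairing the consistency parts with the interpolation factor produces, after inserting the regularity bound for $w$, a term of size $c\,h^2\lvert\ln h\rvert\,\lVert y\rVert_{W^{2,\infty}_{\gamma}(\Omega_R)}\,\lVert\sigma^{-1}(y-y_h)\rVert_{L^2(\Omega_{R/8})}$; the logarithm is produced by the passage to the critical weight $\beta=1$, where the $d_J$-powers governing the associated geometric series nearly cancel, so that the sum over the $I\sim\lvert\ln h\rvert$ levels contributes a factor $\lvert\ln h\rvert$ instead of a convergent constant. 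For the pollution parts I would use $\sigma\sim d_J$ on $\Omega_J$ to identify $d_J^{-1}\lVert y-y_h\rVert_{L^2(\Omega_J')}$ with $\lVert\sigma^{-1}(y-y_h)\rVert_{L^2(\Omega_J')}$ on the inner annuli, which reproduces a multiple of $\lVert\sigma^{-1}(y-y_h)\rVert_{L^2(\Omega_{R/8})}^2$, while on the outer annuli, where $\sigma^{-1}$ is bounded, the same terms collapse to the unweighted contribution $\lvert\ln h\rvert\,\lVert y-y_h\rVert_{L^2(\Omega_R)}$. Collecting everything yields an inequality of the form $X^2\le c\big(h^2\lvert\ln h\rvert\,\lVert y\rVert_{W^{2,\infty}_{\gamma}(\Omega_R)}+\lvert\ln h\rvert\,\lVert y-y_h\rVert_{L^2(\Omega_R)}\big)X+\kappa X^2$ with $X:=\lVert\sigma^{-1}(y-y_h)\rVert_{L^2(\Omega_{R/8})}$; taking $c_I$ large enough (and $h$ small) renders $\kappa<1$, so that the self-term is absorbed and division by $X$ gives the assertion.

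The main obstacle is exactly the borderline exponent. The weight dictated by the data $\sigma^{-2}e$ is the critical value $\beta=1$, which lies on the boundary of the admissible range $[0,1)$ of Lemma \ref{lemma_beta}; this is why the versions for $\sigma^{-\tau}$ with $\tau<1$ in \cite[Lemma 3.10]{APR} and \cite[Lemma 3.61]{Pfefferer} do not carry over verbatim. The regularization $\sigma=r+d_I$ is what keeps $\lVert r\,\sigma^{-1}\rVert_{L^\infty(\Omega)}$ and the data in $L^2(\Omega)$ finite, but it forces the governing estimates to be nearly critical, so that — whether one tracks the effect through the endpoint a priori bound for $w$ or through the resulting near-critical dyadic sum — the net contribution is precisely one power of $\lvert\ln h\rvert$. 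Threading $\sigma$ and the level cut-off $d_I=c_Ih^{1/\mu}$ through the dual-regularity estimate, the interpolation exponents and the summation so that the logarithm appears with exponent exactly one, while simultaneously keeping the kick-back constant $\kappa$ strictly below one, is the delicate part of the argument.
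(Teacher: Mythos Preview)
Your overall architecture --- duality in the regularized weight, dyadic splitting, local $H^1$ estimates from Lemma~\ref{lemma_H1}, interpolation from Lemma~\ref{lemma_int}, and a kick-back governed by $c_I$ --- is exactly the paper's. The gap is that you identify the obstacle (the critical weight $\beta=1$) but do not actually resolve it. Lemma~\ref{lemma_beta} is simply unavailable at $\beta=1$, and with a single admissible weight $\beta=1-\varepsilon$ your consistency term picks up $\|w\|_{W^{2,2}_{1-\varepsilon}}\le c\,h^{-\varepsilon/\mu}\|\sigma^{-1}e\|_{L^2}$, which yields $h^{2-\varepsilon/\mu}$ rather than $h^2\lvert\ln h\rvert$. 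Your alternative suggestion, that the $\lvert\ln h\rvert$ emerges from a ``near-critical'' dyadic sum, does not match your own exponents: with $\varepsilon>0$ and $\beta=1$ the relevant series $\sum_J d_J^{2\varepsilon}$ is convergent, not logarithmic. You also omit the localization: without a cut-off, $a(e,w-I_hw)$ lives on all of $\Omega$, not on $\bigcup_J\Omega_J$, and the ``outer'' contribution to the $\lvert\ln h\rvert\,\|y-y_h\|_{L^2(\Omega_R)}$ term has no clear provenance in your sketch.

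The paper closes these gaps by three coupled devices. First, it multiplies $w$ by a cut-off $\eta$ supported in $\overline{\Omega_{R/4}}$; this both confines the dyadic sum and produces commutator terms bounded by $\|w\|_{H^1(\Omega_R)}\|y-y_h\|_{L^2(\Omega_R)}$. Second, it uses two weights asymmetrically: the pollution part is paired with $\beta=1-\varepsilon$ (global regularity, Lemma~\ref{lemma_beta}, with the loss absorbed via $\sigma^{-\varepsilon}\le d_I^{-\varepsilon}=c_I^{-\varepsilon}h^{-\varepsilon/\mu}$, giving a kick-back constant $c\,c_I^{-\varepsilon}$), while the consistency part is paired with $\beta=1+\varepsilon$ through a \emph{local} a~priori estimate that controls $\lvert\tilde w\rvert_{W^{2,2}_{1+\varepsilon}(\Omega_J')}$ by $\|F\|_{W^{0,2}_{1+\varepsilon}(\Omega_J'')}+\|\tilde w\|_{V^{1,2}_\varepsilon(\Omega_J'')}$. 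Third --- and this is where the logarithm actually enters --- the resulting $\|\tilde w\|_{V^{1,2}_\varepsilon}\le c\|w\|_{H^1(\Omega_R)}$ is combined with the Hardy-type bound $\|w\|_{H^1(\Omega_R)}\le c\,\lvert\ln h\rvert\,\|\sigma^{-1}(y-y_h)\|_{L^2(\Omega_{R/8})}$, proved by testing the dual problem with $w$ and estimating $\|\sigma^{-1}w\|_{L^2(\Omega_R)}$. So the $\lvert\ln h\rvert$ comes neither from an endpoint regularity estimate nor from a divergent sum, but from this $H^1$ bound on the dual solution; your sketch is missing precisely this mechanism.
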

\begin{proof}
We define the characteristic function $\chi$, which is equal to one in $\Omega_{R/8}$ and 
equal to zero in $\Omega\setminus \text{cl} (\Omega_{R/8})$. Next, we introduce a dual boundary value problem
\begin{equation}\label{dual}
\begin{split}
-\Delta w + w &= \sigma^{-2}(y-y_h)\chi\quad  \text{in } \Omega,\\
\partial_n w &= 0\qquad\qquad\qquad\ \text{on}\ \Gamma
\end{split}
\end{equation}
with its weak formulation
\begin{equation}\label{weak}
a(\varphi,w)=(\sigma^{-2}(y-y_h)\chi,\varphi)_{L^2(\Omega)}\ \ \ \forall \varphi \in H^1(\Omega).
\end{equation}
Let $\eta\in C^\infty(\bar \Omega)$ be a cut-off function, which is equal to one in $\Omega_{R/8}$,
$\supp\eta\subset \overline\Omega_{R/4}$,
and $\partial_n \eta = 0$ on $\partial \Omega_R$, with $\lVert \eta \rVert_{W^{k,\infty}(\Omega_R)} \leq c$ for $k \in \mathbb{N}_0.$ By setting
$\varphi=\eta v$ in \eqref{weak} with some $v \in H^1(\Omega)$ one can show that $\tilde{w} = \eta w$ fulfills the equation
\begin{equation}\label{modif_a}
a_{\Omega_R}(v,\tilde{w})=(\eta \sigma^{-2}(y-y_h)\chi - \Delta \eta w - 2\nabla \eta \cdot \nabla w , v)_{L^2(\Omega_R)}\ \ \ \forall v \in H^1(\Omega),
\end{equation}
where the bilinear form $a_{\Omega_R}: H^1(\Omega_R) \times   H^1(\Omega_R) \rightarrow \mathbb{R}$ is defined by
\[
a_{\Omega_R}(\varphi,w):=\int_{\Omega_R} (\nabla \varphi \cdot \nabla w + \varphi w).
\]
By this we get
\begin{align}
\notag
\lVert \sigma^{-1}(y-y_h) &\rVert^2_{L^2(\Omega_{R/8})} = (\eta \sigma^{-2}(y-y_h)\chi,y-y_h)_{L^2(\Omega_R)}\\\notag
&= a_{\Omega_R}(y-y_h,\tilde{w}) + (\Delta \eta w, y-y_h)_{L^2(\Omega_R)} + 2 (\nabla \eta \cdot \nabla w, y-y_h)_{L^2(\Omega_R)}\\\notag
& \leq a_{\Omega_R}(y-y_h,\tilde{w}) + \Big( \lVert \Delta \eta w \rVert_{L^2(\Omega_R)} 
+ 2 \lVert \nabla \eta \cdot \nabla w \rVert_{L^2(\Omega_R)} \Big) \lVert y-y_h \rVert_{L^2(\Omega_R)}\\\label{afirst}
& \leq a_{\Omega_R}(y-y_h,\tilde{w}) + c  \lVert w \rVert_{H^1(\Omega_R)} \lVert y-y_h \rVert_{L^2(\Omega_R)}.
\end{align}
In the next step we estimate the first term on the right-hand side of the previous inequality. Since $\tilde{w}$ is equal to zero in $\Omega_R \setminus \overline\Omega_{R/4}$, we can
use the Galerkin orthogonality of $y -y_h$, i.e., $a_{\Omega_R}(y -y_h,I_h \tilde{w}) = a(y -y_h, I_h \tilde{w}) = 0.$ By this and an application of the Cauchy-Schwarz inequality we get 
\begin{equation}\label{cs}
 a_{\Omega_R}(y-y_h,\tilde{w}) = a_{\Omega_R}(y-y_h,\tilde{w}-I_h \tilde{w}) \leq c \sum_{J=2}^I \lVert y-y_h \rVert_{H^1(\Omega_J)} \lVert \tilde{w}-I_h \tilde{w} \rVert_{H^1(\Omega_J)}.
\end{equation}
Due to $\supp\eta\subset \overline\Omega_{R/4}$ there holds $\tilde{w}-I_h \tilde{w} \equiv 0$ in $\Omega_0$ and $\Omega_1$ provided that $h$ is sufficiently small. 
Now, using the results from the previous lemmas and distinguishing between $2 \leq J \leq I-3$ and $J = I-2,I-1,I$,
we can estimate the terms on the right-hand side of \eqref{cs}.

Let us discuss the case $2 \leq J \leq I-3$ first.
For the interpolation error of the dual solution we get from \eqref{L2IntO} with $\beta = 1 + \varepsilon$ or $\beta=1-\varepsilon$ the estimates 
\begin{align}
\lVert \tilde{w} - I_h \tilde{w} \rVert_{H^1(\Omega_J)} \leq c h d_J^{-\varepsilon-\mu}\lvert \tilde{w}\rvert_{W_{1+\varepsilon}^{2,2}(\Omega_J')},\label{H1_for_w_+eps}\\
\lVert \tilde{w} - I_h \tilde{w} \rVert_{H^1(\Omega_J)} \leq c h d_J^{\varepsilon-\mu}\lvert \tilde{w}\rvert_{W_{1-\varepsilon}^{2,2}(\Omega_J')}.\label{H1_for_w_-eps}
\end{align}
Both estimates are needed in the sequel. For the primal error we get with Lemma~\ref{lemma_H1}
\begin{equation}\label{H1_for_y}
\lVert y - y_h \rVert_{H^1(\Omega_J)} \leq c \Big( h d_J^{2 \varepsilon+ \mu} \lvert y \rvert_{W^{2,\infty}_{\gamma}(\Omega''_J)} + d_J^{-1} \lVert y - y_h \rVert _{L^2(\Omega_J')} \Big).
\end{equation}
To get an estimate for \eqref{cs} in case of $2 \leq J \leq I-3$ we multiply the first term on the right-hand side of \eqref{H1_for_y} with the right-hand side of \eqref{H1_for_w_+eps},  
and the second term with \eqref{H1_for_w_-eps}.
This leads to
\begin{align}\notag
\lVert y - y_h &\rVert_{H^1(\Omega_J)} \lVert \tilde{w} - I_h \tilde{w} \rVert_{H^1(\Omega_J)} \\\label{mult_eror}
&\leq c h^2 d_J^{\varepsilon} \lvert y \rvert_{W^{2,\infty}_{\gamma}(\Omega''_J)} \lvert \tilde{w}\rvert_{W_{1+\varepsilon}^{2,2}(\Omega_J')}+ c h d_J^{-1-\mu + \varepsilon} \lVert y - y_h \rVert _{L^2(\Omega_J')} \lvert \tilde{w}\rvert_{W_{1-\varepsilon}^{2,2}(\Omega_J')}.
\end{align}
Now, we recall the local a priori estimates from \cite[Lemma 3.9, (3.25)--(3.27)]{Pfefferer}, which yield in our case
\begin{equation}\label{a_priori_from_the_proof}
\lvert \tilde{w}\rvert_{W_{1+\varepsilon}^{2,2}(\Omega_J')} \leq \lVert F \rVert_{W_{1+\varepsilon}^{0,2}(\Omega_J'')} + \lVert \tilde{w} \rVert_{V_{\varepsilon}^{1,2}(\Omega_J'')}
\end{equation}
 with  the right-hand side of \eqref{modif_a}
\[
F:= \eta \sigma^{-2}(y-y_h)\chi-\Delta \eta w - 2 \nabla \eta \cdot \nabla w.
\]
Here, we use the weighted Sobolev space $V^{1,2}_{\varepsilon}(\Omega)$ 
containing homogeneous weights, i.\,e.,
\[
 \lVert v \rVert_{V_{\varepsilon}^{1,2}(\Omega_{R})}^2:= \|r^{\varepsilon-1}v\|_{L^2(\Omega_R)}^2 + \|r^\varepsilon \nabla v\|_{L^2(\Omega_R)}^2. 
\]
Inserting  estimate \eqref{a_priori_from_the_proof} into \eqref{mult_eror} yields
\begin{align}\notag
\lVert y - y_h \rVert_{H^1(\Omega_J)} \lVert \tilde{w} - I_h \tilde{w} \rVert_{H^1(\Omega_J)} &\leq c h^2 d_J^{\varepsilon} \lvert y \rvert_{W^{2,\infty}_{\gamma}(\Omega''_J)}
\Big( \lVert F \rVert_{W_{1+\varepsilon}^{0,2}(\Omega_J'')}+\lVert \tilde{w} \rVert_{V_{\varepsilon}^{1,2}(\Omega_J'')} \Big)\\\label{mult1}
&\quad +c h d_J^{-\mu + \varepsilon} \lVert \sigma^{-1} (y - y_h) \rVert _{L^2(\Omega_J')} \lvert \tilde{w}\rvert_{W_{1-\varepsilon}^{2,2}(\Omega_J')}
\end{align}
for $J=2,\hdots,I-3$, where we also used the fact that $d_J^{-1} \leq c \sigma^{-1}(x)$ for $x\in\Omega_J'$.

For the sets $\Omega_J$ with $J=I-2,I-1,I$ we apply Lemma \ref{lemma_H1} to get
\[
\lVert y - y_h \rVert _{H^1(\Omega_J)}  \leq c \Big( h^2  \lvert y \rvert_{W^{2,\infty}_{\gamma}(\Omega''_J)} + d_J^{-1} \lVert y - y_h \rVert _{L^2(\Omega_J')} \Big),
\]
and Lemma \ref{lemma_int} to get
\[
\lVert \tilde{w} - I_h \tilde{w} \rVert_{H^1(\Omega_J)} \leq c c_I^{\max\{0,-\mu+\varepsilon\}} h^{\varepsilon/\mu} \lvert \tilde{w}\rvert_{W_{1-\varepsilon}^{2,2}(\Omega_J')}.
\]
Moreover, the Leibniz rule using $\|\eta\|_{W^{k,\infty}(\Omega_R)}\le c$, $k=0,1,2$
and the global a priori estimate from
Lemma \ref{lemma_beta} with $\beta = 1-\varepsilon$ yield the estimate
\begin{align}\label{eq:W22_reg_w}
  \lvert \tilde{w}\rvert_{W_{1-\varepsilon}^{2,2}(\Omega_R)} &\leq c \lVert w \rVert_{W_{1-\varepsilon}^{2,2}(\Omega_R)} \leq  c\lVert  \sigma^{-2} (y - y_h) \rVert _{W^{0,2}_{1-\varepsilon}(\Omega_{R/8})}\nonumber\\
  &\leq c \lVert  \sigma^{-1-\varepsilon} (y - y_h) \rVert _{L^2(\Omega_{R/8})}.
\end{align}
Combining the last three estimates leads to
\begin{align}\notag
\lVert y& - y_h \rVert_{H^1(\Omega_J)} \lVert \tilde{w} - I_h \tilde{w} \rVert_{H^1(\Omega_J)} \\\notag
&\leq c \Big(  h^{2+ \varepsilon/\mu}  \lvert y \rvert_{W^{2,\infty}_{\gamma}(\Omega''_J)} 
+  c_I^{\max\{0,-\mu+\varepsilon\}} h^{\varepsilon/\mu} \lVert  \sigma^{-1} (y - y_h) \rVert _{L^2(\Omega_J')} \Big) \\\notag
&\quad\times \lVert  \sigma^{-1-\varepsilon} (y - y_h) \rVert _{L^2(\Omega_{R/8})}\\\label{multI}
&\leq c \Big(  h^2 \lvert y \rvert_{W^{2,\infty}_{\gamma}(\Omega''_J)} +   c_I^{\max\{-\varepsilon,-\mu\}} \lVert  \sigma^{-1} (y - y_h) \rVert _{L^2(\Omega_J')} \Big)  \lVert  \sigma^{-1} (y - y_h) \rVert _{L^2(\Omega_{R/8})},
\end{align}
where we exploited the property $\sigma^{-\varepsilon} \leq d_I^{-\varepsilon} = c_I^{-\varepsilon} h^{-\varepsilon/\mu}.$ 
Inserting inequalities \eqref{mult1} and \eqref{multI} into \eqref{cs} yields 
\begin{align}\notag
a&_{\Omega_R}(y-y_h,\tilde{w})\\\notag
&\leq c  \sum_{J=2}^{I-3} h^2 d_J^{ \varepsilon} \lvert y \rvert_{W^{2,\infty}_{\gamma}(\Omega''_J)}
\Big( \lVert F \rVert_{W_{1+\varepsilon}^{0,2}(\Omega_J'')}+\lVert \tilde{w} \rVert_{V_{\varepsilon}^{1,2}(\Omega_J'')} \Big)\\\notag
&+ c   \sum_{J=2}^{I-3} h d_I^{-\mu + \varepsilon} \lVert \sigma^{-1}( y - y_h) \rVert _{L^2(\Omega_J')} \lvert \tilde{w}\rvert_{W_{1-\varepsilon}^{2,2}(\Omega_J')}\\\label{sums}
& + c \sum_{J=I-2}^{I} \Big(  h^2 \lvert y \rvert_{W^{2,\infty}_{\gamma}(\Omega''_J)} +   c_I^{-\varepsilon} \lVert  \sigma^{-1} (y - y_h) \rVert _{L^2(\Omega_J')} \Big)  \lVert  \sigma^{-1} (y - y_h) \rVert _{L^2(\Omega_{R/8})},
\end{align}
where  we used $d_J^{-\mu+\varepsilon} \leq d_I^{-\mu+\varepsilon}$ and $\mu > \varepsilon$.
For the first two sums in \eqref{sums} we start with applying the discrete Cauchy-Schwarz inequality.
Moreover, for the first one we use a basic property of geometric series,
\begin{equation}\label{geom_ser}
\sum_{J=2}^{I-3} d_J^{2\varepsilon} \leq \sum_{J=0}^{I-1} \Big( 2^{ -2\varepsilon} \Big)^J
= \frac{1- 2^{-2 \varepsilon I}}{1- 2^{-2 \varepsilon}}
\le c(1-d_I^{2\varepsilon}) \le c
\end{equation}
with $c=(1-2^{-2\varepsilon})^{-1}$,
which implies $ \Big( \sum_{J=2}^{I-3} d_J^{2\varepsilon} \Big)^{1/2} \leq c.$
Note that the generic constant in \eqref{geom_ser} depends on $\varepsilon$, and tends to infinity for $\varepsilon \rightarrow 0$.
To treat the second sum in \eqref{sums}
we insert estimate \eqref{eq:W22_reg_w} as well as the properties
$\sigma^{-\varepsilon} \le d_I^{-\varepsilon}$ and $hd_I^{-\mu} = c_I^{-\mu}$.
This leads to
\begin{align}\notag
a&_{\Omega_R}(y-y_h,\tilde{w})\\\notag
&\leq c  h^2 \lvert y \rvert_{W^{2,\infty}_{\gamma}(\Omega_R)} \Big( \lVert F \rVert_{W_{1+\varepsilon}^{0,2}(\Omega_R)}+\lVert \tilde{w} \rVert_{V_{\varepsilon}^{1,2}(\Omega_R)} \Big)\\\notag
& +c c_I^{-\mu} \lVert \sigma^{-1} (y - y_h) \rVert_{L^2(\Omega_R)}  \lVert  \sigma^{-1} (y - y_h) \rVert _{L^2(\Omega_{R/8})}\\\label{a}
&+ c \Big( h^2 \lvert y \rvert_{W^{2,\infty}_{\gamma}(\Omega_R)} + c_I^{-\varepsilon} \lVert \sigma^{-1} (y - y_h) \rVert_{L^2(\Omega_R)} \Big)   \lVert  \sigma^{-1} (y - y_h) \rVert _{L^2(\Omega_{R/8})},
\end{align}
Due to the properties of the cut-off function $\eta$ and $\lVert r^{\varepsilon} \rVert_{L^{\infty}(\Omega)} \leq c$, $\lVert r^{1+\varepsilon} \rVert_{L^{\infty}(\Omega)} \leq c$,
one can show that
\[
\lVert F \rVert_{W_{1+\varepsilon}^{0,2}(\Omega_R)} \leq c \Big( \lVert  \sigma^{-1} (y - y_h) \rVert _{L^2(\Omega_{R/8})} + \lVert w \rVert_{H^1(\Omega_R)}  \Big).
\]
To estimate the $V^{1,2}_{\varepsilon}(\Omega_{R})$-norm of $\tilde w$ we use the trivial embedding
\[H^1(\Omega_{R}) \simeq W^{1,2}_0(\Omega_{R})\hookrightarrow W^{1,2}_{\varepsilon}(\Omega_{R}),\]
and exploit that the norms in $W^{1,2}_{\varepsilon}(\Omega_{R})$ and $V^{1,2}_{\varepsilon}(\Omega_{R})$ are equivalent for $\varepsilon>0$ \cite[Theorem 7.1.1]{Kozlov}.
Taking also into account the Leibniz rule with $\lVert \eta \rVert_{W^{k,\infty}(\Omega_R)} \leq c $, we obtain
\begin{equation}\label{w_formula}
\lVert \tilde{w} \rVert_{V^{1,2}_{\varepsilon}(\Omega_R)} \leq c\lVert \tilde{w} \rVert_{H^1(\Omega_R)} \leq c \lVert w \rVert_{H^1(\Omega_R)} \leq c \lvert \ln h\rvert \lVert  \sigma^{-1} (y - y_h) \rVert_{L^2(\Omega_{R/8})}.
\end{equation}
The last step is confirmed at the end of this proof. Using the previous results, inequality \eqref{a} can be rewritten in the following way
\begin{align}\label{last}
\notag
a&_{\Omega_R}(y-y_h,\tilde{w})\\
& \leq c \Big( h^2 \lvert \ln h\rvert \lvert y \rvert_{W^{2,\infty}_{\gamma}(\Omega_R)} + c_I^{-\varepsilon}\lVert \sigma^{-1} (y - y_h) \rVert_{L^2(\Omega_R)} \Big) \lVert  \sigma^{-1} (y - y_h) \rVert _{L^2(\Omega_{R/8})}.
\end{align}
By inserting \eqref{last} and the last step of \eqref{w_formula} into \eqref{afirst}, and dividing by $\lVert \sigma^{-1} (y - y_h) \rVert_{L^2(\Omega_{R/8})}$,
we obtain
\begin{align*}
 &\lVert \sigma^{-1} (y - y_h) \rVert_{L^2(\Omega_{R/8})} \\
 &\quad \leq c\Big( h^2 \lvert \ln h\rvert \lvert y \rvert_{W^{2,\infty}_{\gamma}(\Omega_R)} 
 + c_I^{-\varepsilon}\lVert \sigma^{-1} (y - y_h) \rVert_{L^2(\Omega_{R/8})}
  +\lvert \ln h\rvert \lVert y-y_h \rVert_{L^2(\Omega_R)} \Big).
\end{align*}
Here, we also used that $\sigma^{-1} = (r+d_I)^{-1} \leq r^{-1} \leq (R/8)^{-1} \leq c$, if $r \geq R/8.$ Finally, we get
\[
 \Big(1-c c_I^{-\varepsilon} \Big) \lVert \sigma^{-1} (y - y_h) \rVert_{L^2(\Omega_{R/8})}  
 \leq c \Big(  h^2 \lvert \ln h\rvert \lvert y \rvert_{W^{2,\infty}_{\gamma}(\Omega_R)} + \lvert \ln h\rvert \lVert y-y_h \rVert_{L^2(\Omega_R)}  \Big).
\]
By choosing the constant $c_I$ large enough, such that $c c_I^{-\varepsilon}  < 1$ holds,
the desired result follows.

It remains to prove the last step in \eqref{w_formula}. A similar proof was already given in \cite[Lemma 4.13]{Sirch}. 
There holds
\begin{align}\label{h1_estimate_w}
 \lVert w \rVert^2_{H^1(\Omega_R)} & \leq a(w,w) = (\sigma^{-2}(y-y_h)\chi,w) = (\sigma^{-1}(y-y_h),\sigma^{-1}w)_{L^2(\Omega_{R/8})}\nonumber\\
 &\leq \lVert \sigma^{-1}(y-y_h) \rVert_{L^2(\Omega_{R/8})} \lVert \sigma^{-1}w \rVert_{L^2(\Omega_{R})}\nonumber\\
 &\leq  c \lvert \ln h\rvert\lVert \sigma^{-1}(y-y_h) \rVert_{L^2(\Omega_{R/8})}\lVert w \rVert_{H^1(\Omega_{R})},
\end{align}
where in the last step we used estimate (4.36) from \cite[Lemma 4.13]{Sirch}, which is also valid for the Neumann boundary value problem. 
\end{proof}

From Lemma \ref{lemma_inf} and Lemma \ref{lemma_l2} we conclude the local estimate 
\begin{align}\label{lastcorner}
  \lVert y - y_h \rVert_{L^{\infty}(\Omega_{R/16})}
  &= \max_{J=4,\ldots,I} \lVert y-y_h \rVert_{L^\infty(\Omega_J)} \nonumber\\
  &\leq c  h^2 \lvert \ln h\rvert  \lVert y \rVert_{W_{\gamma}^{2,\infty}(\Omega_R)}+ \lvert \ln h\rvert  \lVert y - y_h \rVert_{L^2(\Omega_R)}.
\end{align}

\textbf{Case 2:} $x_0\in\Omega_{R_j/16}$ with $(\lambda_j,\gamma_j,\mu_j)$ satisfying 
\eqref{eq:th_assumptions} (ii).
The assumptions in this case imply $\omega_j\in (0,\pi/2)$.
We assume that the corner $x^{(j)}$ is located at the origin,
and drop the subscript $j$ as in the previous case. The basic idea is to apply Theorem \ref{thscott} in a local fashion, 
which can be realized with the technique from \cite[Theorem 1]{DLSW}.
First, we introduce a triangular domain $\hat{\Omega}_R$ 
(see Figure \ref{fig_polygon}) with vertices located at the points 
$(R, \varphi)$ with $ \varphi \in \{ 0, \omega\}$ and the origin.  
This construction guarantees that 
\[\dist(\partial \Omega_{R/2} \setminus  \Gamma,  \partial\hat{\Omega}_{R}\setminus\Gamma) > (\sqrt{2}-1)R/2 > 0,\]
which allows us (for sufficiently small $h$) to extend the mesh $\mathcal{T}_h \rvert_{\Omega_{R/2}} := \{ T\in \mathcal{T}_h : T \cap \Omega_{R/2} \neq \emptyset \}$  quasi-uniformly to an exact triangulation $\hat{\mathcal{T}}_h$ of $\hat{\Omega}_{R}$. 
We also introduce a smooth cut-off function $\eta_1$ such that $\eta_1=1$ in $\Omega_{R/2}$ and $\dist(\supp\eta_1, \partial\hat{\Omega}_{R}\setminus\Gamma) \ge c > 0$.
For our further considerations we define the Ritz projection of $\tilde{y}=\eta_1 y$ as follows. Let
\begin{figure}
\centering
\includegraphics[width=.4\textwidth]{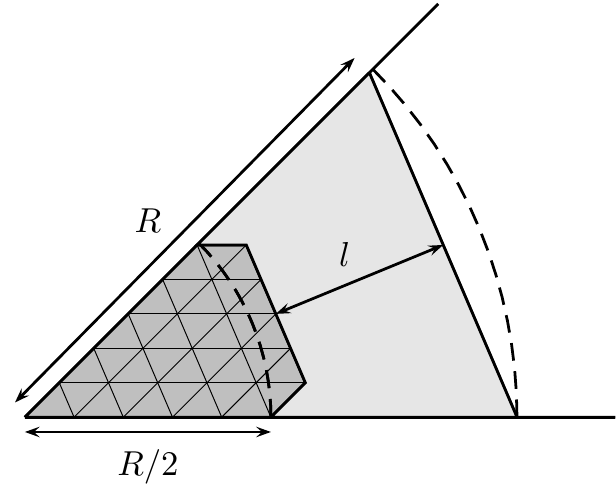}
 \caption{$\mathcal T_h|_{\Omega_{R/2}}$ - dark gray domain, $\hat{\Omega}_R$ - dark gray and light gray domains.}\label{fig_polygon}
\end{figure}
\[
V_h(\hat{\mathcal{T}}_h):=\{ v_h \in C(\cl\hat{\Omega}_R) : v_h\rvert_{T} \in \mathcal{P}_1 \ \text{for all} \ T \in \hat{\mathcal{T}}_h \}
\]
denote the space of ansatz functions with respect to the new triangulation $\hat{\mathcal{T}}_h$. The function $\tilde{y}_h \in V_h(\hat{\mathcal{T}}_h)$ is the unique solution of
\begin{equation}
a(\tilde{y}-\tilde{y}_h,v_h)=0, \quad \text{for all } v_h \in V_h(\hat{\mathcal{T}}_h). 
\end{equation}
As $y=\tilde y$ on $\Omega_{R/8}$, we get from the triangle inequality
\begin{equation}\label{y_h_tri}
\lVert y - y_h \rVert_{L^{\infty}(\Omega_{R/16})} \leq \lVert \tilde{y} - \tilde{y}_h \rVert_{L^{\infty}(\Omega_{R/8})} + \lVert \tilde{y}_h - y_h \rVert_{L^{\infty}(\Omega_{R/8})}.
\end{equation}
Due to $\tilde{y} \in W^{2,\infty}(\hat{\Omega}_{R})$, we apply Theorem \ref{thscott} and get 
\begin{equation}\label{tilde_y}
\lVert \tilde{y} - \tilde{y}_h \rVert_{L^{\infty}(\Omega_{R/8})} 
\leq c h^2 \lvert \ln h \rvert \lVert \tilde{y} \rVert_{W^{2,\infty}(\Omega_R)}
\leq c h^2 \lvert \ln h \rvert \lVert y\rVert_{W_{\vec{\gamma}}^{2,\infty}(\Omega)},
\end{equation}
where we used the Leibniz rule in the last step. Note that it is possible to construct $\eta_1$ such that $\lVert \eta_1 \rVert_{W^{k,\infty}(\Omega)} \leq c$ for $k=0,1,2$.
Next, we confirm that the function $\tilde y_h - y_h$ is discrete harmonic on $\Omega_{R/2}$, this is, for every $v_h\in V_h$ with $\supp v_h\subset\overline\Omega_{R/2}$ 
there holds
\begin{equation*}
 a(\tilde y_h - y_h, v_h) = a(\tilde y - y,v_h) = 0.
\end{equation*}
This is a consequence of $\eta_1\equiv1$ (and hence $y=\tilde y$) on $\Omega_{R/2}$, 
as well as $v_h \equiv 0$ in $\Omega\setminus\Omega_{R/2}$.
An application of the discrete Sobolev inequality \cite[Lemma 4.9.2]{Brenner_Scott} and the discrete Caccioppoli type estimate from \cite[Lemma 3.3]{Demlow}
then yield
\[
\lVert \tilde{y}_h - y_h  \rVert_{L^{\infty}(\Omega_{R/8})} \leq c \lvert \ln h \rvert^{1/2} \lVert \tilde{y}_h - y_h  \rVert_{H^1(\Omega_{R/4})}  \leq c d^{-1} \lvert \ln h \rvert^{1/2} \lVert \tilde{y}_h - y_h  \rVert_{L^2(\Omega_{R/2})},  
\]
where $d = \dist(\partial \Omega_{R/2} \setminus \Gamma, \partial \Omega_{R/4} \setminus \Gamma)$ and, by construction, $d = 1/4$ (remember $R=1$). Next, we use the triangle inequality and the fact that $y=\tilde{y}$ on $\Omega_{R/2}$. This implies
\begin{align}\notag
\lVert \tilde{y}_h - y_h  \rVert_{L^{\infty}(\Omega_{R/8})}
&\leq c \lvert \ln h \rvert^{1/2} \Big( \lVert \tilde{y} - \tilde{y}_h  \rVert_{L^2(\hat{\Omega}_R)} + \lVert y - y_h  \rVert_{L^2(\Omega)}  \Big)\\\label{tilde_y_h}
& \leq c h^2 \lvert \ln h \rvert^{1/2} \lVert \tilde y  \rVert_{H^2(\hat\Omega_R)} +   c \lvert \ln h \rvert^{1/2} \lVert y - y_h  \rVert_{L^2(\Omega)},
\end{align}
where we used a standard $L^2$-error estimate in the last step.
The estimates \eqref{tilde_y} and \eqref{tilde_y_h} finally yield
\begin{equation}\label{corner_less90}
\lVert y- y_h  \rVert_{L^{\infty}(\Omega_{R/8})}  \leq c h^2  \lvert \ln h \rvert \lVert y  \rVert_{W^{2,\infty}_{\vec{\gamma}}(\Omega)} + c\lvert \ln h \rvert^{1/2} \lVert y - y_h  \rVert_{L^2(\Omega)} .
\end{equation}

\textbf{Case 3:} This case arises when the point $x_0$ where $|y-y_h|$
attains its maximum is located in $\tilde\Omega_{R/16}$. We use \cite[Theorem 10.1]{Wahlbin} with $s=0$ to get
\[
  \lVert y- y_h  \rVert_{L^{\infty}(\tilde\Omega_{R/16})}
  \leq c \Big( \lvert \ln h \rvert \lVert y- I_h y  \rVert_{L^{\infty}(\tilde\Omega_{R/32})} + \lVert y- y_h  \rVert_{L^2(\tilde\Omega_{R/32})}  \Big).
\]
Since the domain $\tilde\Omega_{R/32}\subset\tilde\Omega_{R/64}$ has a constant and positive distance to the corners of $\Omega$, 
we conclude with standard interpolation error estimates
\begin{align}\notag
\lVert y- y_h  \rVert_{L^{\infty}(\tilde\Omega_{R/16})} 
&\leq c \Big( h^2 \lvert \ln h \rvert \lVert y \rVert_{W^{2,\infty}(\tilde\Omega_{R/64})} + \lVert y- y_h  \rVert_{L^2(\tilde\Omega_{R/32})}  \Big)\\\label{L_infty_inn}
&\leq c \Big( h^2 \lvert \ln h \rvert \lVert y \rVert_{W^{2,\infty}_{\vec{\gamma}}(\Omega)} + \lVert y- y_h  \rVert_{L^2(\tilde\Omega_{R/32})}  \Big).
\end{align}

\begin{proof}[Proof of Theorem \ref{th}] The estimates \eqref{lastcorner}, \eqref{corner_less90} and \eqref{L_infty_inn} result in 
\[
\lVert y- y_h  \rVert_{L^{\infty}(\Omega)}  \leq c h^2 \lvert \ln h \rvert \lVert y  \rVert_{W^{2,\infty}_{\vec{\gamma}}(\Omega)} + \lvert \ln h \rvert \lVert y -y_h  \rVert_{L^2(\Omega)}.
\]
For the remaining term on the right-hand side we apply 
Lemma \ref{lemma_pfefferer} for the choice $\vec\beta = 1-\vec\mu$ to conclude
the desired estimate.
Note that this choice implies  
the embedding 
\begin{equation}\label{eq:W2inf_W22_embedding}
  W^{2,\infty}_{\vec\gamma}(\Omega)\hookrightarrow W^{2,2}_{\vec\beta}(\Omega)
\end{equation}
due to $\gamma_j < 2-2\mu_j <  2-\mu_j= 1+\beta_j$, which follows
from the assumptions upon $\gamma_j$. Moreover, the required regularity
assumption holds due to $\beta_j > \gamma_j - 1 > 1-\lambda_j$.
\end{proof}

\begin{remark}\label{rem:quasi_uniform}
For quasi-uniform meshes one can show with similar arguments that the estimate
\[
	\|y-y_h\|_{L^\infty(\Omega)} \le c h^{\min\{2,\lambda-\varepsilon\}} \lvert \ln h \rvert 
   \lVert y \rVert_{W_{\vec{\gamma}}^{2,\infty}(\Omega)},
\]
holds, where the weights are chosen as 
\[
\gamma_j:=\max\{0,2-\lambda_j+\varepsilon\},\quad j=1,\ldots,m.\]
Here, $\lambda:=\min\{\lambda_j:=\pi/\omega_j\colon j=1,\ldots,m\}$ is the smallest singular exponent and $\varepsilon>0$ arbitrary but sufficiently small.
The sharpness of this convergence rate is confirmed by the numerical experiments in Section~\ref{sec:experiments}.
A detailed proof is given in \cite{Rogovs}.
\end{remark}

\section{Error estimates for semilinear elliptic problems}
The aim of this section is to extend the results from the previous section to certain nonlinear problems. To be more precise, we investigate the semilinear problem
\begin{equation}\label{semilinear}
\begin{aligned}
-\Delta y + y + d(y) &= f \quad \text{in } \Omega,\\
\partial_n y &=g \quad \text{on } \Gamma,
\end{aligned}
\end{equation}
where we assume that the input data $f$ and $g$ are sufficiently regular such that the solution $y$ belongs 
to $W^{2,\infty}_{\vec\gamma}(\Omega)$ with $\vec\gamma\in [0,2)^m$ as in
Remark \ref{rem:W2inf_regularity}. 
Under the following assumption on the nonlinearity $d$, this regularity is shown e.\,g.\ in \cite[Corollary 3.26]{Pfefferer}.
\begin{assumption}\label{assum:semi}
	The function $d :\mathbb{R}\rightarrow\mathbb{R}$, $y\mapsto d(y)$, is monotonically increasing and continuous.
	Furthermore, the function $d$ fulfills a local Lipschitz condition of the following form:
	For every $M > 0$ there exists a constant $L_{d,M} > 0$ such that
	\[
	\lvert d(y_1) -d(y_2) \rvert \leq L_{d,M}\lvert y_1 -y_2 \rvert
	\]
	for all $y_i \in \mathbb{R}$ with $\lvert y_i \rvert < M$, $i=1,2$.
\end{assumption}
\begin{remark}
	The discussion of more general nonlinearities and corresponding discretization error estimates is possible as well. In particular, the lower order term $y+d(y)$ may be replaced by a nonlinear function $\tilde d(x,y)$. Of course, in this case, further assumptions on $\tilde d$ are required, especially in order to ensure coercivity. For details we refer to \cite[Sections 3.1.2 and 3.2.6]{Pfefferer}.
\end{remark}
The variational solution of \eqref{semilinear} is a function $y\in H^1(\Omega) \cap C^0(\overline{\Omega})$ which satisfies 
\begin{equation}\label{semilinear_var}
 a(y,v) + (d(y),v)_{L^2(\Omega)}=(f,v)_{L^2(\Omega)}+(g,v)_{L^2(\Gamma)}\quad \forall v\in H^1(\Omega),
\end{equation}
where $a:H^1(\Omega)\times H^1(\Omega) \rightarrow \mathbb{R}$ is the bilinear form defined in \eqref{eq:bilinear}.
Under the assumptions on $d$, and the data $f$ and $g$, this variational formulation possesses a unique solution \cite[Theorem 4.8]{Fredi}.
Its finite element approximation $y_h \in V_h$, with $V_h$ as in Section 3, 
is the unique solution of the variational formulation
\begin{equation}\label{semilinear_var_h}
 a(y_h,v_h) + (d(y_h),v_h)_{L^2(\Omega)}=(f,v_h)_{L^2(\Omega)}+(g,v_h)_{L^2(\Gamma)}\quad \forall v_h\in V_h.
\end{equation}
Next, we show an error estimate for this approximate solution on graded triangulations
satisfying \eqref{mesh_cond}. The fundamental idea is taken from \cite[Section 3.2.6]{Pfefferer}. It is based on a supercloseness result
between the Ritz projection to the continuous solution and the finite element solution: Let $\tilde y_h\in V_h$ be the unique solution to
\[
	a(y-\tilde y_h,v_h)=0\quad \forall v_h\in V_h.
\]
By classical arguments, it is possible to show that $\tilde y_h$ is uniformly bounded in $L^\infty(\Omega)$ independent of $h$ for $f\in L^r(\Omega)$ and $g\in L^s(\Gamma)$ with $r,s>1$, see \cite[Corollary 3.47]{Pfefferer}. Moreover, Theorem~\ref{th} is applicable such that
\begin{equation}\label{eq:inftyRitz}
	\lVert y-\tilde y_h \rVert_{L^{\infty}(\Omega)}\leq c h^2 \lvert \ln h\rvert  
	\lVert y \rVert_{W_{\vec{\gamma}}^{2,\infty}(\Omega)},
\end{equation}
provided that $y$ belongs $W_{\vec{\gamma}}^{2,\infty}(\Omega)$, and $\vec\mu\in(0,1]^m$ and $\vec\gamma\in [0,2)^m$ satisfy the assumptions of Theorem~\ref{th}. The aforementioned supercloseness between $y_h$ and $\tilde y_h$ is summarized in the following lemma, taken from \cite[Lemma 3.70]{Pfefferer}. The proof essentially relies on the monotonicity and the local Lipschitz continuity of the nonlinearity $d$.
\begin{lemma}{\cite[Lemma 3.70]{Pfefferer}}\label{lem:superclose}
	Let Assumption~\ref{assum:semi} be fulfilled. Moreover, let $f\in L^r(\Omega)$ and $g\in L^s(\Gamma)$ with $r,s>1$. Then, there holds
	\begin{equation}\label{eq:superclose}
		\left\|\tilde y_h-y_h\right\|_{H^1(\Omega)}\leq c \left\|y-\tilde y_h\right\|_{L^2(\Omega)}.
	\end{equation}
\end{lemma}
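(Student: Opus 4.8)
The plan is to set $e_h := \tilde y_h - y_h \in V_h$ and to exploit that the bilinear form $a$ in \eqref{eq:bilinear} coincides with the $H^1(\Omega)$-inner product, so that $\lVert e_h\rVert_{H^1(\Omega)}^2 = a(e_h,e_h)$. It then suffices to bound $a(e_h,e_h)$ by a multiple of $\lVert y-\tilde y_h\rVert_{L^2(\Omega)}\,\lVert e_h\rVert_{H^1(\Omega)}$ plus a term that can be absorbed into the left-hand side. First I would rewrite $a(e_h,e_h)$ using the three defining relations. From the Ritz projection property $a(\tilde y_h,v_h)=a(y,v_h)$ with $v_h=e_h$, and from testing the continuous equation \eqref{semilinear_var} and the discrete equation \eqref{semilinear_var_h} with $e_h\in V_h$ (the data terms $(f,e_h)_{L^2(\Omega)}$ and $(g,e_h)_{L^2(\Gamma)}$ cancel), one obtains
\[
\lVert e_h\rVert_{H^1(\Omega)}^2 = a(\tilde y_h - y_h,e_h) = a(y-y_h,e_h) = (d(y_h)-d(y),e_h)_{L^2(\Omega)}.
\]

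Next I would split $e_h = (\tilde y_h - y) - (y_h - y)$ and insert this into the right-hand side. The crucial observation is that the monotonicity of $d$ from Assumption \ref{assum:semi} yields the pointwise inequality $(d(y_h)-d(y))(y_h-y)\ge 0$, so that $(d(y_h)-d(y),y_h-y)_{L^2(\Omega)}\ge 0$. This contribution has a favorable sign and can simply be dropped, leaving
\[
\lVert e_h\rVert_{H^1(\Omega)}^2 \le (d(y_h)-d(y),\tilde y_h - y)_{L^2(\Omega)}.
\]

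To estimate the remaining term I would invoke the local Lipschitz condition from Assumption \ref{assum:semi}. This requires an $h$-uniform $L^\infty(\Omega)$-bound for the arguments $y$, $y_h$ and $\tilde y_h$: the continuous solution is bounded since $y\in C^0(\overline\Omega)$, $\tilde y_h$ is uniformly bounded by \cite[Corollary 3.47]{Pfefferer}, and an analogous bound is available for the discrete solution $y_h$. Choosing $M$ accordingly and writing $L:=L_{d,M}$, I would estimate $\lvert d(y_h)-d(y)\rvert\le L\lvert y_h-y\rvert\le L(\lvert e_h\rvert+\lvert\tilde y_h-y\rvert)$ pointwise, using $y_h-y=(\tilde y_h-y)-e_h$. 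After the Cauchy--Schwarz inequality this gives
\[
\lVert e_h\rVert_{H^1(\Omega)}^2 \le cL\Big(\lVert e_h\rVert_{L^2(\Omega)}\lVert\tilde y_h-y\rVert_{L^2(\Omega)} + \lVert\tilde y_h-y\rVert_{L^2(\Omega)}^2\Big).
\]

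Finally I would apply Young's inequality to the mixed term, use $\lVert e_h\rVert_{L^2(\Omega)}\le\lVert e_h\rVert_{H^1(\Omega)}$, and absorb $\tfrac12\lVert e_h\rVert_{H^1(\Omega)}^2$ into the left-hand side by a kick-back argument, which yields $\lVert e_h\rVert_{H^1(\Omega)}^2\le c\lVert y-\tilde y_h\rVert_{L^2(\Omega)}^2$ and hence \eqref{eq:superclose}. I expect the main obstacle to be the justification of the Lipschitz step: one must secure the $h$-uniform $L^\infty$-bounds on $y_h$ and $\tilde y_h$ so that a single constant $L_{d,M}$ is valid for all $h$. Once this is in place and monotonicity has been used to discard the sign-definite term, the remainder is a routine absorption estimate and requires no further structural input.
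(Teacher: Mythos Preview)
The paper does not prove this lemma itself; it is quoted from \cite[Lemma 3.70]{Pfefferer}, and the only hint given is that the argument ``essentially relies on the monotonicity and the local Lipschitz continuity of the nonlinearity $d$'', together with the remark immediately following the lemma that the constant is $h$-independent \emph{because $\tilde y_h$ and $y$ are uniformly bounded in $L^\infty(\Omega)$}. Your derivation of
\[
\lVert e_h\rVert_{H^1(\Omega)}^2 = (d(y_h)-d(y),e_h)_{L^2(\Omega)}
\]
from the Ritz-projection identity and the two variational formulations is correct, and the use of monotonicity to discard a sign-definite piece is exactly the right mechanism.

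The weak point is the step where you apply the local Lipschitz bound to $d(y_h)-d(y)$. This requires an $h$-uniform $L^\infty(\Omega)$-bound on the \emph{discrete semilinear solution} $y_h$, which you assert but do not justify; the paper only records such a bound for $y$ and for the Ritz projection $\tilde y_h$ (via \cite[Corollary 3.47]{Pfefferer}). The remark after the lemma makes clear that the intended proof does \emph{not} rely on a bound for $y_h$. The clean way to avoid this is to split the nonlinearity around $\tilde y_h$ rather than split $e_h$:
\[
(d(y_h)-d(y),e_h)_{L^2(\Omega)}
= \underbrace{(d(y_h)-d(\tilde y_h),\,\tilde y_h - y_h)_{L^2(\Omega)}}_{\le 0\ \text{by monotonicity}}
+ (d(\tilde y_h)-d(y),e_h)_{L^2(\Omega)}.
\]
Now the Lipschitz condition is applied to $d(\tilde y_h)-d(y)$, which only needs the available $L^\infty$-bounds on $\tilde y_h$ and $y$, and one obtains directly
\[
\lVert e_h\rVert_{H^1(\Omega)}^2 \le L_{d,M}\,\lVert \tilde y_h - y\rVert_{L^2(\Omega)}\,\lVert e_h\rVert_{L^2(\Omega)}
\le L_{d,M}\,\lVert \tilde y_h - y\rVert_{L^2(\Omega)}\,\lVert e_h\rVert_{H^1(\Omega)},
\]
so that no Young/kick-back step is needed. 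Your route would become rigorous if you supplied an independent uniform $L^\infty$-bound for $y_h$, but as written it leans on an unproved (and, per the paper's remark, unnecessary) assumption.
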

\begin{remark}
	Note that, altough we only assume a local Lipschitz continuity of $d$, the constant $c$ in~\eqref{eq:superclose} is bounded independent of $h$ since $\tilde y_h$ and $y$ are uniformly bounded in $L^\infty(\Omega)$.
\end{remark}
By means of the supercloseness, it is easily possible to transfer the pointwise error estimates for linear problems to the case of semilinear problems.

\begin{theorem}
Let the assumptions of Lemma~\ref{lem:superclose} be fulfilled. Moreover, let $y\in W^{2,\infty}_{\vec\gamma}(\Omega)$ with $\vec\gamma\in [0,2)^m$. 
Then the discretization error can be estimated by
\[
 \lVert y-y_h \rVert_{L^{\infty}(\Omega)} \leq c h^2 \lvert  \ln h \rvert \lVert y \rVert_{W^{2,\infty}_{\vec{\gamma}}(\Omega)},
\]
provided that $\vec\mu\in(0,1]^m$ and $\vec\gamma\in [0,2)^m$ satisfy the assumptions of Theorem~\ref{th}. 
\end{theorem}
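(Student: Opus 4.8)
The plan is to reduce the semilinear estimate to the already-established linear one by splitting off the Ritz projection and then converting the supercloseness from Lemma~\ref{lem:superclose} into a pointwise bound. Let $\tilde y_h\in V_h$ be the Ritz projection of $y$ introduced above, and start from the triangle inequality
\[
\lVert y-y_h\rVert_{L^\infty(\Omega)} \le \lVert y-\tilde y_h\rVert_{L^\infty(\Omega)} + \lVert\tilde y_h-y_h\rVert_{L^\infty(\Omega)}.
\]
The first summand is precisely the bound \eqref{eq:inftyRitz}, which is the linear pointwise estimate of Theorem~\ref{th} applied to the Ritz projection of $y$; it already contributes the desired $c\,h^2\lvert\ln h\rvert\,\lVert y\rVert_{W^{2,\infty}_{\vec\gamma}(\Omega)}$. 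Hence everything reduces to the supercloseness term.

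For $\lVert\tilde y_h-y_h\rVert_{L^\infty(\Omega)}$ I would use that $\tilde y_h-y_h\in V_h$ is discrete and pass to the energy norm through the two-dimensional discrete Sobolev inequality \cite[Lemma 4.9.2]{Brenner_Scott},
\[
\lVert\tilde y_h-y_h\rVert_{L^\infty(\Omega)} \le c\,\lvert\ln h\rvert^{1/2}\,\lVert\tilde y_h-y_h\rVert_{H^1(\Omega)}.
\]
Lemma~\ref{lem:superclose} then bounds the $H^1$-norm by $c\,\lVert y-\tilde y_h\rVert_{L^2(\Omega)}$, and the optimal weighted $L^2$-estimate of Lemma~\ref{lemma_pfefferer}---used exactly as in the proof of Theorem~\ref{th}, with the choice $\vec\beta=1-\vec\mu$ and the embedding \eqref{eq:W2inf_W22_embedding}---gives $\lVert y-\tilde y_h\rVert_{L^2(\Omega)}\le c\,h^2\lVert y\rVert_{W^{2,\infty}_{\vec\gamma}(\Omega)}$. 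Combining the three inequalities yields
\[
\lVert\tilde y_h-y_h\rVert_{L^\infty(\Omega)} \le c\,h^2\,\lvert\ln h\rvert^{1/2}\,\lVert y\rVert_{W^{2,\infty}_{\vec\gamma}(\Omega)},
\]
which carries only the half-power of the logarithm and is therefore dominated by the first summand; adding the two pieces produces the claimed rate $c\,h^2\lvert\ln h\rvert\,\lVert y\rVert_{W^{2,\infty}_{\vec\gamma}(\Omega)}$.

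I do not expect a deep obstacle here, since the heavy lifting sits in Theorem~\ref{th} and Lemma~\ref{lem:superclose}; the care needed is mainly in keeping the constants uniform in $h$. The supercloseness constant is independent of $h$ only because $y$ and $\tilde y_h$ are uniformly bounded in $L^\infty(\Omega)$ (via the regularity assumption and \cite[Corollary 3.47]{Pfefferer}), which is where the monotonicity and local Lipschitz continuity of $d$ enter. One should also make sure the discrete Sobolev inequality is legitimate on the graded meshes \eqref{mesh_cond}: its logarithmic factor is controlled by the smallest element size $h_{\min}\sim h^{1/\mu}$, but since $\lvert\ln h^{1/\mu}\rvert\sim\lvert\ln h\rvert$ the factor stays $c\lvert\ln h\rvert^{1/2}$ and does not degrade the final order.
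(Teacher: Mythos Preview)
Your proof is correct and follows essentially the same route as the paper: triangle inequality with the Ritz projection $\tilde y_h$, then \eqref{eq:inftyRitz} for the first term and the discrete Sobolev inequality plus Lemma~\ref{lem:superclose} plus Lemma~\ref{lemma_pfefferer} for the second. Your remark that the logarithmic factor in the discrete Sobolev inequality on graded meshes involves $h_{\min}\sim h^{1/\mu}$ and hence still scales like $\lvert\ln h\rvert^{1/2}$ is a useful observation that the paper leaves implicit.
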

\begin{proof}
By introducing $\tilde y_h$ as an intermediate function, we obtain
\begin{align*}
\lVert y-y_h \rVert_{L^{\infty}(\Omega)} &\leq \lVert y-\tilde{y}_h \rVert_{L^{\infty}(\Omega)}  + \lVert \tilde{y}_h-y_h \rVert_{L^{\infty}(\Omega)} \\\label{semilin_tri}
& \leq \lVert y-\tilde{y}_h \rVert_{L^{\infty}(\Omega)}  + c \lvert \ln h \lvert^{1/2} \lVert \tilde{y}_h-y_h \rVert_{H^1(\Omega)}\\
& \leq \lVert y-\tilde{y}_h \rVert_{L^{\infty}(\Omega)}  + c \lvert \ln h \lvert^{1/2} \lVert y-\tilde{y}_h\rVert_{L^2(\Omega)},
\end{align*}
where in the last steps we used the discrete Sobolev inequality \cite[Lemma 4.9.2]{Brenner_Scott} and Lemma~\ref{lem:superclose}. The assertion finally follows from \eqref{eq:inftyRitz} and Lemma~\ref{lemma_pfefferer}.
\end{proof}

\section{Numerical example}\label{sec:experiments}
This section is devoted to the numerical verification of the theoretical convergence results of Section \ref{sec:fem}.
To this end, we use the following numerical example.
The computational domain $\Omega_{\omega}$ depending on the interior angle $\omega \in (0 , 2 \pi)$ is defined by
\begin{equation}\label{domain}
 \Omega_{\omega} := (-1,1)^2\ \cap\ \{ x \in \mathbb{R}^2 : (r(x),\varphi(x)) \in (0,\sqrt{2}] \times (0,\omega) \},
\end{equation}
where $r$ and $\varphi$ denote the polar coordinates located at the origin. In
the following, we consider the interior angles $\omega=3\pi/4$ (convex domain)
and $\omega=3\pi/2$ (non-convex domain).
To generate meshes satisfying the condition \eqref{mesh_cond}, we start with a
coarse initial mesh and apply several uniform refinement steps.
Afterwards, depending on the grading parameter $\mu$ we transform the mesh by moving all nodes $X^{(i)}$ 
within a circular sector with radius $R$ around the origin according to
\[
 X_{new}^{(i)} = X^{(i)} \Bigg( \frac{r(X^{(i)})}{R} \Bigg)^{1/\mu - 1}
\]
for all $i$ with $|X^{(i)}| < R$.
One can show that this transformation implies the mesh condition~\eqref{mesh_cond}. 
Meshes with $\mu=1$ and $\mu=0.3$ are depicted in Figure \ref{fig_meshes}.
Note that also other refinement strategies are possible.
For instance, one can successively mark and refine all elements violating \eqref{mesh_cond}.
The local refinement can be realized with a newest vertex bisection algorithm \cite{Baensch},
or a red-green-blue refinement.  
\begin{figure}
 \begin{center}
   \includegraphics[width=.35\textwidth]{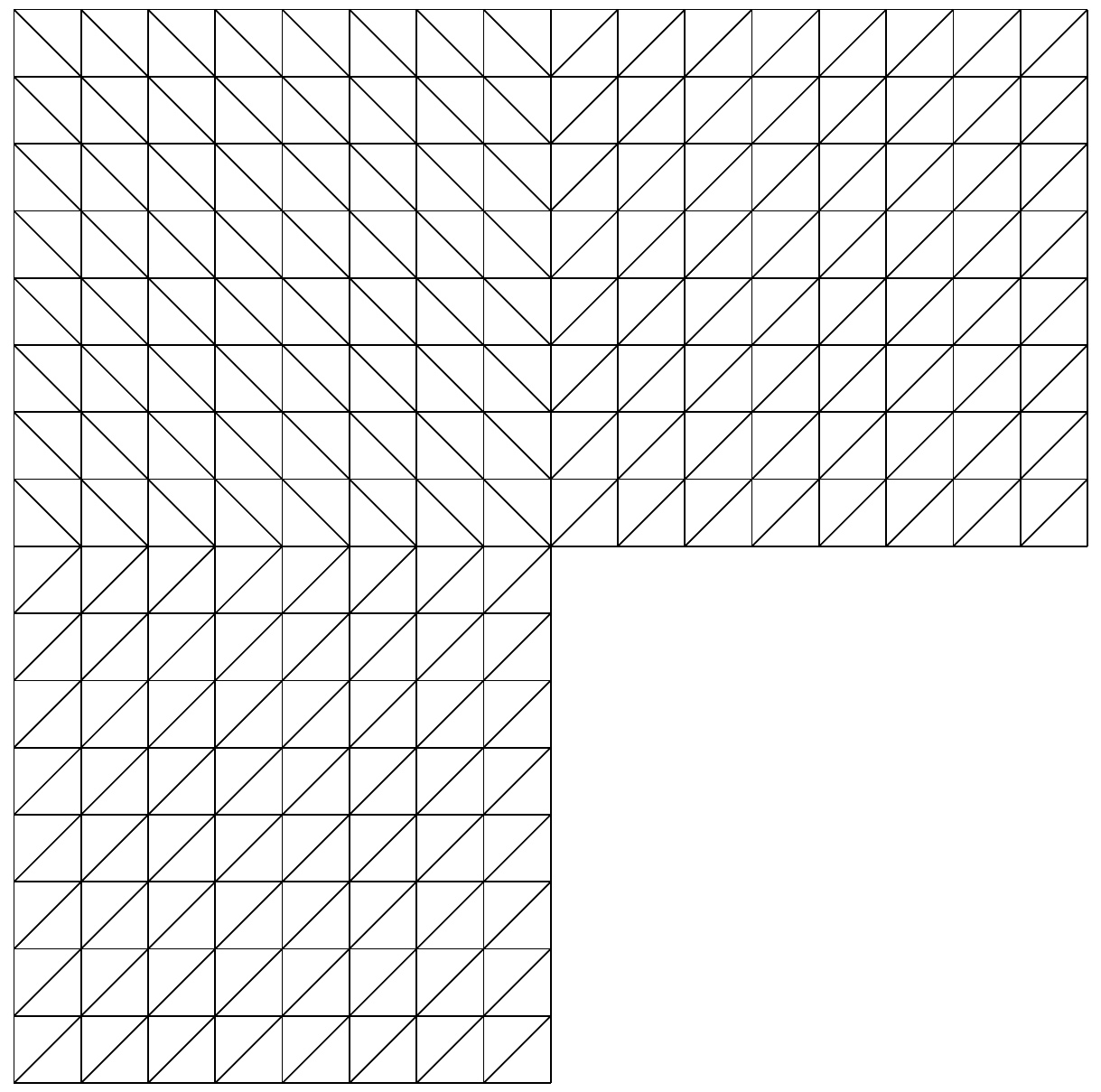}\hspace{2cm}
   \includegraphics[width=.35\textwidth]{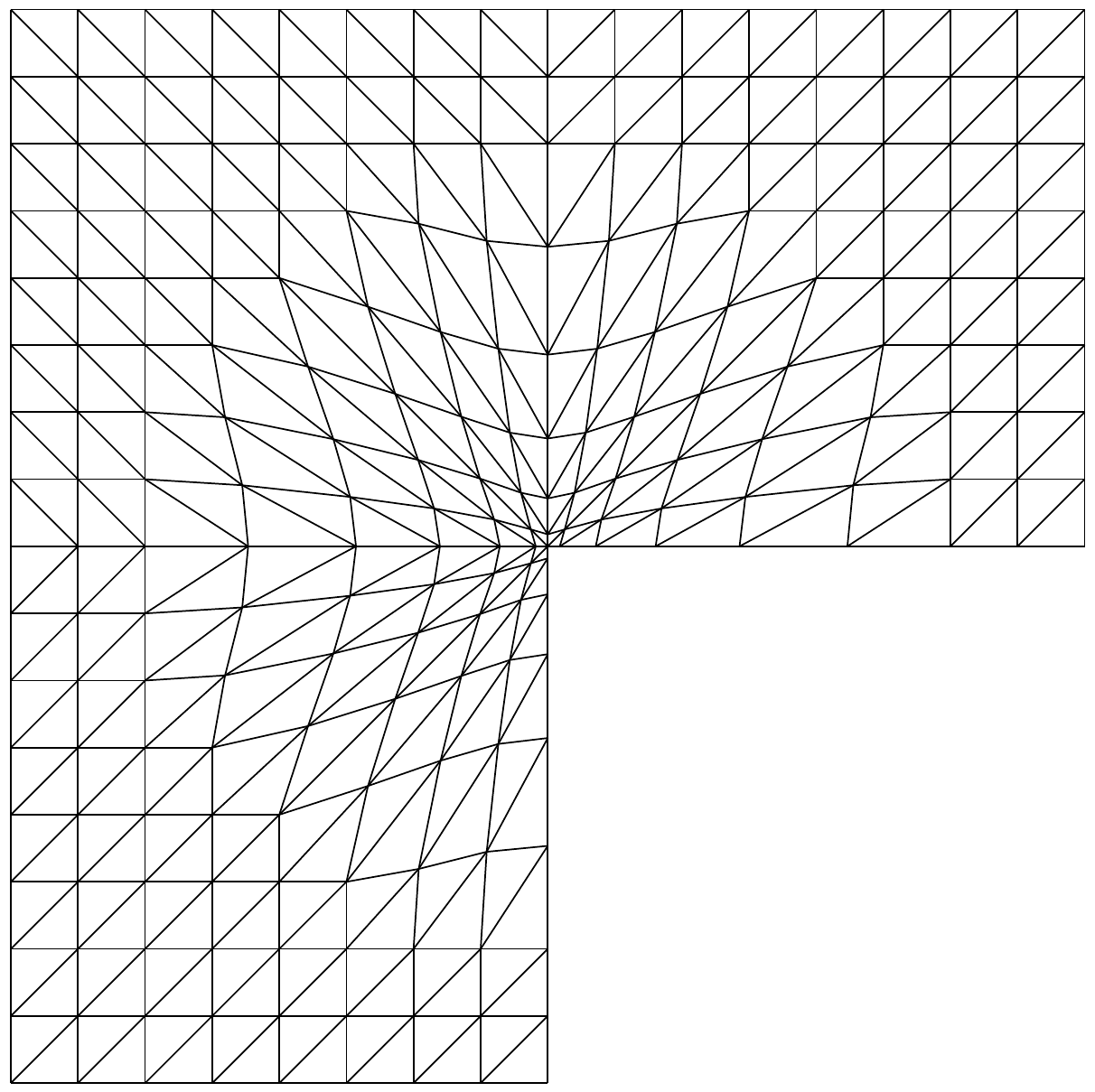}
 \end{center}
 \caption{Triangulation of the domain $\Omega_{3\pi/2}$ with a quasi-uniform ($\mu=1$) and a graded mesh ($\mu=0.3$)}\label{fig_meshes}
\end{figure}

The benchmark problem we consider is taken from \cite[Example 3.66]{Pfefferer} and reads
\begin{align*}
 -\Delta y + y &= r^{\lambda} \cos(\lambda \varphi)  &&\text{in}\ \Omega_{\omega},\\
 \partial_n y &= \partial_n \Big( r^{\lambda} \cos(\lambda \varphi) \Big) &&\text{on}\ \Gamma:=\partial\Omega_\omega,
\end{align*}
with $\lambda = \pi/\omega$. The unique solution of this problem is
$
 y= r^{\lambda} \cos(\lambda \varphi).
$
The experimental order of convergence $\text{eoc}(L^{\infty}(\Omega_{\omega}))$ is calculated by
\[
 \text{eoc}(L^{\infty}(\Omega_{\omega})):=\frac{\ln(\lVert y-y_{h_{i-1}} \rVert_{L^{\infty}(\Omega_{\omega})}/\lVert y-y_{h_i} \rVert_{L^{\infty}(\Omega_{\omega})})}{\ln(h_{i-1}/h_i)},
\]
where $h_{i-1}$ and $h_i$ are the mesh sizes of two consecutive triangulations $\mathcal{T}_{h_{i-1}}$ and $\mathcal T_{h_{i}}$. 
In Table \ref{table_3pi/4} one can find the computed errors
 $\lVert e_h \rVert_{L^{\infty}(\Omega_{3 \pi/4})} := \lVert  I_h y - y_h \rVert_{L^{\infty}(\Omega_{3 \pi/4})}$
 on sequences of meshes with $\mu=0.6 < 2/3 = \lambda/2$ and $\mu=1$.
 We measure only the discrete $L^{\infty}$-norm, since the initial error is dominated by this norm, due to
\[
\lVert y-y_h \rVert_{L^{\infty}(\Omega_{\omega})} \leq \lVert y - I_h y \rVert_{L^{\infty}(\Omega_{\omega})} + \lVert I_h y - y_h \rVert_{L^{\infty}(\Omega_{\omega})}.
\]
Note that the interpolation error is bounded by $ch^2$ if $\mu < \lambda/2$.

From our theory we expect that meshes with grading parameter $\mu < \lambda/2=2/3$ yield a convergence rate tending to 2, when the mesh size tends to zero.
For the choice $\mu=0.6$ this is confirmed. As predicted in Remark~\ref{rem:quasi_uniform} the convergence rate 
$\lambda-\varepsilon=4/3-\varepsilon$ for arbitrary $\varepsilon>0$ is confirmed for quasi-uniform meshes as well.

In Table \ref{table_3pi/2} the errors $\lVert y-y_h \rVert_{L^{\infty}(\Omega_{3 \pi/2})}$
can be found. The grading parameters are $\mu=0.3 < 1/3 = \lambda/2$, $\mu=0.6$ and $\mu=1.$
One can see that for meshes with $\mu < \lambda/2$ the convergence rate is quasi-optimal.
For meshes that are not graded appropriately,
the convergence order is not optimal too, it is about $\lambda/\mu =10/9$.
The rate $2/3-\varepsilon$ stated for quasi-uniform meshes in Remark~\ref{rem:quasi_uniform} can also be observed by the numerical experiment.

\begin{table}
\begin{center}
 \begin{tabular}{c c c c c}
   \toprule
  & \multicolumn{2}{c}{$\mu=1$} & \multicolumn{2}{c}{$\mu=0.6$}\\
  \midrule
  mesh size $h$ & $\lVert e_h \rVert_{L^{\infty}(\Omega_{\omega})}$ & eoc & $\lVert e_h \rVert_{L^{\infty}(\Omega_{\omega})}$ & eoc\\
  \midrule
  0.022097 & 1.09e-04 & 1.26 & 9.38e-05 & 1.92\\
  0.011049 & 4.50e-05 & 1.27 & 2.48e-05 & 1.94\\
  0.005524 & 1.83e-05 & 1.30 & 6.45e-06 & 1.96\\
  0.002762 & 7.39e-06 & 1.31 & 1.66e-06 & 1.97\\
  0.001381 & 2.96e-06 & 1.32 & 4.22e-07 & 1.98\\
  \bottomrule
 \end{tabular}
\caption{Discretization errors $e_h=y-y_h$ with $\omega = 3\pi/4$.}\label{table_3pi/4}
\end{center}
\end{table}

\begin{table}
\begin{center}
 \begin{tabular}{c c c c c c c}
   \toprule
  & \multicolumn{2}{c}{$\mu=1$} & \multicolumn{2}{c}{$\mu=0.6$} & \multicolumn{2}{c}{$\mu=0.3$}\\
  \midrule
  mesh size $h$ & $\lVert e_h \rVert_{L^{\infty}(\Omega_{\omega})}$ & eoc & $\lVert e_h \rVert_{L^{\infty}(\Omega_{\omega})}$ & eoc&
  $\lVert e_h \rVert_{L^{\infty}(\Omega_{\omega})}$ & eoc\\
  \midrule
  0.022097 & 6.07e-03 & 0.66 & 1.77e-03 & 1.15 & 1.44e-03 & 1.91\\
  0.011049 & 3.83e-03 & 0.67 & 8.17e-04 & 1.13 & 4.07e-04 & 1.92\\
  0.005524 & 2.41e-03 & 0.67 & 3.78e-04 & 1.12 & 1.11e-04 & 1.92\\
  0.002762 & 1.52e-03 & 0.67 & 1.75e-04 & 1.12 & 2.96e-05 & 1.95\\
  0.001381 & 9.57e-04 & 0.67 & 8.09e-05 & 1.12 & 7.70e-06 & 1.96\\
  \bottomrule
 \end{tabular}
 \caption{Discretization errors $e_h=y-y_h$ with $\omega = 3\pi/2$.}\label{table_3pi/2}
\end{center}
\end{table}

\textbf{Acknowledgment.} Supported by the DFG through the International Research Training Group IGDK 1754 ``Optimization and Numerical Analysis for Partial Differential Equations with Nonsmooth Structures''.
\bibliographystyle{plain}\bibliography{lit}
\end{document}